\newcommandx{\pozn}[2][1=]{\todo[linecolor=green,backgroundcolor=green!40,bordercolor=black,#1]{#2}}             
\newcommand{\fld}{\ensuremath{\Bbbk}}
\newcommand{\hdeg}[1]{{|#1|}}
\newcommand{\dg}[1]{|{#1}|}
\newcommand{\id}{1}
\newcommand{\ot}{\otimes}
\newcommand{\cCo}{\ensuremath{\mathsf{Cor}}}
\newcommand{\oP}{\mathcal{P}} 
\newcommand{\oQ}{\mathcal{Q}} 
\newcommand{\co}{\mathbf{o}}	
\newcommand{\Fun}{\mathrm{Fun}} 
\newcommand{\card}{{\tt card}} 
\newcommand{\set}[2]{\left\{ #1 \ | \ #2 \right\} }	
\newcommand{\ooo}[2]{\sideset{_{#1}}{_{#2}}{\mathop{\circ}}} 
\newcommand{\oOo}[2]{\sideset{_{#1}}{_{#2}}{\mathop{\bullet}}} 
\renewcommand{\log}[1]{\mathrm{log} \left( #1 \right)}
\theoremstyle{definition}
\newtheorem{theorem}{Theorem}
\newtheorem{definition}[theorem]{Definition}
\newtheorem{lemma}[theorem]{Lemma}
\newtheorem{remark}[theorem]{Remark}
\newtheorem{example}[theorem]{Example}
\newtheorem{corollary}{Corollary}[theorem]
\def\blfootnote{\xdef\@thefnmark{}\@footnotetext}
\title{Connected sum for modular operads and Beilinson-Drinfeld algebras}
\author{Martin Doubek}
\author{Branislav Jurčo\footnote{\href{mailto:Branislav.Jurco@mff.cuni.cz}{Branislav.Jurco@mff.cuni.cz} \\}}
\author{Lada Peksová\footnote{\href{mailto:lada.peksova@mff.cuni.cz}{lada.peksova@mff.cuni.cz} \\}}
\affil{Mathematical Institute, Faculty of Mathematics and Physics, Charles University, Prague 186 75, Czech Republic}
\author{Ján Pulmann\footnote{\href{mailto:Jan.Pulmann@ed.ac.uk}{Jan.Pulmann@ed.ac.uk}   \\}}
\affil{School of Mathematics, University of Edinburgh, U.K.}
\date{}
\begin{document}
\maketitle

\vspace{-0.8cm}

\vspace*{0.2cm}
\begin{center}
	\LARGE \textit{Dedicated to the memory of Martin Doubek.}
\end{center}
\vspace*{1cm}

\begin{abstract}
Modular operads relevant to string theory can be equipped with an additional structure, coming from the connected sum of surfaces. Motivated by this example, we introduce a notion of connected sum for general modular operads. We show that a connected sum induces a commutative product on the
space of functions associated to the modular operad. Moreover, we combine this product with Barannikov's non-commutative Batalin-Vilkovisky structure present on this space of functions, obtaining a Beilinson-Drinfeld algebra. Finally, we study the quantum master equation using the exponential defined using this commutative product.
\end{abstract}

\section{Introduction}

Batalin-Vilkovisky (BV) formalism \cite{Batalin1981} is a formal integration technique that originated in quantum field theory. Its basic ingredients are an odd, second order differential operator $\Delta$ on the space of functionals and a $\Delta$-closed functional $e^{S/\hbar}$, i.e. a quantum master action. Observables are then computed by taking an integral over a Lagrangian submanifold in the space of fields, weighted by $e^{S/\hbar}$. The closedness of $e^{S/\hbar}$ ensures that the result is independent of the choice of the Lagrangian, generalizing gauge independence.

It was observed by Barannikov \cite{BarannikovModopBV} that one can obtain a similar algebraic setup for any modular operad $\oP$ \cite{GK} in dg vector spaces. Concretely, for any odd symplectic vector space $V$, Barannikov defined a vector space $\Fun_\oP(V)$ equipped with a BV operator and a bracket, giving a non-commutative version of a BV algebra. For $\mathcal P = \mathcal{QC}$, the \emph{quantum closed} modular operad, one recovers the usual BV formalism for $V$ \cite{BarannikovModopBV, markl2001loop, zwiebach-closed}.

The goal of this work is to give a construction of the so-far missing commutative product usually present in the BV formalism. To this end, we introduce the notion of a \emph{connected sum} on a modular operad $\oP$. For a modular operad $\oP$ with such connected sum, we define a commutative product on the space $\Fun_{\oP}(V)$, compatible with the structure introduced by Barannikov. However, in this way we obtain a Beilinson-Drinfeld algebra \cite{BeilinsonDrinfeld, CostelloGwilliam}, a close relative of a BV algebra. In the examples $\mathcal{QC}$ and $\mathcal{QO}$ coming from 2D topology, this structure is induced by the actual connected sum of surface. Thus, the distinction between BV and BD algebras acquires a topological explanations: see Figure \ref{fig:BDaxiom}. We explicitly describe the commutative products in these two examples, getting the expected products of polynomials and cyclic words built from letters in $V^*$.

Instead of the equation $\Delta e^{S/\hbar} = 0$, one usually writes the formally equivalent \emph{quantum master equation}
\[ 2\hbar\Delta S + \{S, S \} = 0, \]
which was also the form used in \cite{BarannikovModopBV}. Using the commutative product, we can now make sense of the exponential $e^{S/\hbar}$, after an appropriate completion. With a simple non-degeneracy condition on the connected sum, we prove that the above quantum master equation is equivalent to $\Delta e^{S/\hbar} =0$.

Commutative products and BV algebra structures on $\Fun_{\oP}(V)$ coming from the \emph{disjoint union} of surfaces were considered Kaufmann, Ward and Z\'{u}\~{n}iga in \cite{Kaufmann-odd}; see also \cite{SenZwiebach, Schwarz, Kaufmann-Feynman}. Connected sum for the modular operad $\mathcal{QO}$ appeared in the recent work of Berger and Kaufmann \cite{BergerKaufmann}. See Remark \ref{remark_literature} for a more detailed review.

\subsection{Notations and conventions}
We consider $\mathbb Z$-graded vector spaces over a field with characteristic zero. The degree of a homogeneous element $v$ is be denoted $\hdeg{v}$. Differentials have degree $+1$.

We denote by $\sqcup$ the disjoint union and $\setminus$ the set difference. By $[n]$, we will denote the set $\lbrace 1, 2, \ldots n \rbrace$. The permutation group of $[n]$ is denoted by $\Sigma_n$. The cardinality of a set $A$ is denoted $\card{A}$.

\subsection{Acknowledgements}
We would like to thank Ralph Kaufmann for answering our questions about Feynman categories and connected sums.

The research of B.J. was supported by grant GAČR EXPRO 19–28628X. The research of J.P. was supported by the NCCR SwissMAP and Postdoc.Mobility 203065 grants of the SNSF. 
\medskip

This paper gives an extended account of some of the results announced by the third author in \cite{Lada}.

\section{Modular operads and the connected sum}\label{2}

Modular operads were introduced by Getzler and Kapranov \cite{GK}. We start by recalling a definition of a modular operad in the spirit of \cite{DJMS, DJM}.

\begin{definition} \label{DEFCorr}
Denote $\cCo$ the \textbf{category of stable corollas}: the objects are pairs $(C,G)$ with $C$ a finite set and $G$ a non-negative half-integer such that the \emph{stability condition} is satisfied: \begin{equation}\label{eq:stability}2(G-1)+\card(C)>0.\end{equation}
Morphisms $(C, G) \to (D, G')$ exist only if $G = G'$, in which case they are bijections $C\xrightarrow{\cong} D$. 
\end{definition}

\begin{remark}
The condition of stability was introduced in the context of modular operads by Getzler and Kapranov, and  its name comes from the theory of moduli spaces of curves.  In our context, the stability condition will ensure convergence of certain formal exponentials, see Corollary \ref{cor:QMEexponential}.
\end{remark}

\begin{definition} \label{DEFModOp}
A \textbf{modular operad} $\oP$ is a functor $\oP$ from $\cCo$ to the category of dg vector spaces (with morphisms of degree 0), together with a collection of degree 0 chain maps
\begin{gather*}
        \set{\ooo{a}{b}\colon\oP(C_1\sqcup\{a\},G_1)\ot\oP(C_2\sqcup\{b\},G_2)\!\to\!\oP(C_1\sqcup C_2,G_1+G_2)}{(C_1,G_1),\!(C_2,G_2)\in\cCo} \,\,\mbox{and}\\
        \set{\ooo{}{ab} = \ooo{}{ba}\colon\oP(C\sqcup\{a,b\},G)\to\oP(C,G+1)}{(C,G)\in\cCo}.
\end{gather*}
These data are required to satisfy the following axioms:
\begin{itemize}
\item (MO1) $(\oP(\rho|_{C_1} \sqcup \sigma|_{C_2})) \ \ooo{a}{b} = \ooo{\rho(a)}{\sigma(b)} \ (\oP(\rho)\ot\oP(\sigma))$,
\item (MO2) $\oP(\rho|_{C}) \ \ooo{}{ab} = \ooo{}{\rho(a)\rho(b)}\oP(\rho)$,
\item (MO3) $\ooo{a}{b} (x\ot y)  = (-1)^{\dg{x}\dg{y}} \ooo{b}{a} (y\ot x)$ \hskip 0.5cm
                        for any $x\in\oP(C_1\sqcup\{a\},G_1)$, $y\in\oP(C_2\sqcup\{b\},G_2)$,
\item (MO4) $\ooo{}{ab} \ \ooo{}{cd} = \ooo{}{cd} \ \ooo{}{ab}$,
\item (MO5) $\ooo{}{ab} \ \ooo{c}{d} = \ooo{}{cd} \ \ooo{a}{b}$,
\item (MO6) $\ooo{a}{b} \ (\ooo{}{cd}\ot\id) = \ooo{}{cd} \ \ooo{a}{b}$\,\, \mbox{and}
\item (MO7) $\ooo{a}{b} \ (\id\ot\ooo{c}{d}) = \ooo{c}{d} \ (\ooo{a}{b}\ot\id)$,
\end{itemize}
whenever the expressions make sense.
\end{definition}

As in \cite{DJM, DJMS}, we also define odd modular operads, which are special cases of \emph{twisted} modular operads of \cite{GK}.

\begin{definition} \label{ODEFModOp}
An \textbf{odd modular operad} is defined similarly as the modular operad with the only exception of the operadic compositions, now denoted as $\oOo{a}{b}$ and $\oOo{}{ab}$, being of degree 1. Correspondingly, the above  axioms (MO4)--(MO7) will get an extra minus sign. See \cite[Definition~4.]{DJM} for details.
\end{definition}

\subsection{Connected sum}
 Let us now define connected sum on a modular operad, motivated by the connected sum operation on surfaces.
\begin{definition}\label{def_connected_sum}
A \textbf{modular operad with a connected sum} is a modular operad $\oP$ equipped with two collection of degree 0 chain maps\footnote{The seemingly strange shift of $G$ by $1$ and $2$ is chosen to match already existing conventions, see Remark \ref{remark_Gshifts} for details.}
\begin{equation}\label{connected_sum_on_two}
\#_2 \colon \oP(C,G) \otimes \oP(C',G') \rightarrow \oP(C\sqcup C', G+G'+1)
\end{equation}
and
\begin{equation}\label{connected_sum_on_one}
\#_1 \colon \oP(C,G)  \rightarrow \oP(C, G+2)
\end{equation}
such that
\begin{itemize}
\item (CS1) $(\oP (\sigma \sqcup\sigma'))\#_2 = \#_2 (\oP(\sigma) \otimes \oP(\sigma'))$, $\oP(\sigma) \#_1 = \#_1 \oP(\sigma)$ for all bijections
$\sigma\colon C\rightarrow D$, $\sigma'\colon C'\rightarrow D'$, 
\item (CS2) $\#_2 \tau=\#_2$, where $\tau$ is the monoidal symmetry (from the category of graded vector spaces),
\item (CS3) $\#_2(1\otimes \#_2)=\#_2(\#_2\otimes 1), \qquad \#_2(\#_1 \otimes 1)= \#_1\#_2 \,$
\item (CS4)  as maps $\oP(C\sqcup \lbrace a,b\rbrace ,G)\rightarrow \oP(C, G+3)$,
$$\ooo{}{ab}\#_1=\#_1 \ooo{}{ab},$$
\item (CS5a) as maps $\oP(C,G)\otimes \oP(C',G')\rightarrow \oP(C\sqcup C'\setminus \lbrace a,b \rbrace, G+G'+2)$, 
$$ \ooo{}{ab} \#_2  = 
  \begin{cases} 
   \#_2 (\ooo{}{ab} \otimes 1) &  \text{if }  a,b \in C \\
   \#_2 (1\otimes \ooo{}{ab}) & \text{if }   a,b \in C'\\
  \#_1 \ooo{a}{b} & \text{if }   a\in C, b\in C'\\
 \#_1 \ooo{b}{a} & \text{if }   b\in C, a\in C',
  \end{cases}$$
\item (CS5b) as maps $\oP(C\sqcup \lbrace a\rbrace,G)\otimes \oP(C'\sqcup \lbrace b \rbrace,G')\rightarrow \oP(C\sqcup C', G+G'+2)$, 
  $$\ooo{a}{b}(\#_1\otimes 1)= \#_1\ooo{a}{b},$$ 
\item (CS6)  as maps $\oP(C\sqcup \lbrace a\rbrace,G)\otimes \oP(C',G')\otimes \oP(C'',G'')\rightarrow \oP(C\sqcup C'\sqcup C''\setminus \lbrace b \rbrace, G+G'+G''+1)$,
$$\ooo{a}{b} (1\otimes \#_2)=
\begin{cases}
\#_2 (\ooo{a}{b}\otimes 1) & \ldots b\in C'\\
\#_2 (1\otimes \ooo{a}{b})(\tau\otimes 1) & \ldots b\in C'',
\end{cases}$$
where the map $(\tau\otimes 1)\colon\oP(C,G)\otimes \oP(C',G')\otimes \oP(C'',G'') \rightarrow \oP(C',G')\otimes \oP(C,G)\otimes \oP(C'',G'') $ switches the first two tensor factors.
\end{itemize}
\end{definition}

\begin{remark}\label{remark_literature}
If one considers the disjoint union of surfaces, instead of the connected sum, its compatibility with the operadic compositions $\ooo{a}{b}$ and $\ooo{}{ab}$ will look similarly to Definition \ref{def_connected_sum}. An important difference will appear in axioms (CS5a): the disjoint union followed by $\ooo{}{ab}$ is just equal to $\ooo{a}{b}$, and there is no analogue of $\#_1$. Such approach to operads, abstracting the disjoint union, was (to our knowledge) first considered  by Schwarz \cite[Sec.~2]{Schwarz}. There, $\nu$ is used for the disjoint sum, $\sigma$ for the self-composition $\ooo{}{ab}$; the composition $\ooo{a}{b}$ can be defined from these two operations.

Later, a similar operation was considered in generality by Borisov and Manin \cite{BorisovManin} and for modular operads by Kaufmann and Ward \cite{Kaufmann-Feynman}, under the name of mergers/horizontal compositions. See e.g. \cite[Eq.~(5.4),(5.5)]{Kaufmann-Feynman} for the disjoint-union-analogue of (CS5a). The commutative product and the resulting BV algebra was studied by Kaufmann, Ward and Z\'{u}\~{n}iga in \cite{Kaufmann-odd}.  A notable precursor in string field theory is the work of Sen and Zwiebach \cite[Sec.~7.1]{SenZwiebach}.

The connected sum of surfaces was considered, for the modular operad $\mathcal{QO}$, by Berger and Kaufmann \cite[Sec.~5.6]{BergerKaufmann}. There, they notice the need for an analogue of (CS5a) \cite[{Sec.~5.6, ``equation 2.9 does not hold''}]{BergerKaufmann} and remark that such connected sums define Feynman categories, functors out of which are equivalent to our modular operads with connected sum.
\end{remark}

Similarly to Definition \ref{def_connected_sum}, we can consider an odd modular operad equipped with a connected sum.
\begin{definition}\label{def_connected_sum_odd}
An \textbf{odd modular operad with a connected sum} is as in Definition \ref{def_connected_sum}, with the black bullet replacing the white one. 
 \end{definition}

Note that $\#_1$ and $\#_2$ are again degree 0 operations. To make the difference between the odd and untwisted cases more explicit, we will write down the axioms (CS5a) and (CS6), evaluated on elements, in both cases 

If $p\in \oP(C,G), p'\in \oP(C',G')$ and $p''\in \oP(C'',G'')$, then in the untwisted case (CS5a),
$$ \ooo{}{ab} (p \, \#_2\,  p') = 
  \begin{cases} 
   (\ooo{}{ab} p)\, \#_2\,  p' & \ldots\  a,b \in C \\
   p \, \#_2\,  ( \ooo{}{ab}p') & \ldots\  a,b \in C' \\
  \#_1 \, ( p \ooo{a}{b} p') & \ldots\  a\in C, b\in C'\\
 \#_1 \, (p \ooo{b}{a} p') & \ldots\  b\in C, a\in C'
  \end{cases}$$
  and in the odd case,
    $$ \oOo{}{ab}(p \,  \#_2 \, p') = 
  \begin{cases} 
   (\oOo{}{ab} p)\, \#_2\,  p' & \ldots\  a,b \in C \\
   p\, \#_2 \, (\oOo{}{ab} p') (-1)^{|p|} & \ldots\  a,b \in C' \\
  \#_1 \, (p \oOo{a}{b} p') & \ldots\  a\in C, b\in C'\\
 \#_1 \, (p \oOo{b}{a} p') & \ldots\  b\in C, a\in C'.
  \end{cases}$$
Concerning (CS6), in untwisted case, we have
$$ p \ooo{a}{b} (p' \, \#_2\,  p'')  = 
  \begin{cases} 
   ( p \ooo{a}{b} p' )\, \#_2\,  p'' & \ldots\  b\in C',\\
   p' \, \#_2\,  (p \ooo{a}{b} p'') & \ldots\  b\in C'',
  \end{cases}$$
 whereas in the odd case,
 $$ p \oOo{}{ab} (p' \, \#_2\,  p'')  = 
  \begin{cases} 
   ( p \oOo{a}{b} p' )\, \#_2\,  p'' & \ldots\  b\in C',\\
   (-1)^{|p||p'|+|p'|} p' \, \#_2\,  (p \oOo{a}{b} p'') & \ldots\  b\in C''.
  \end{cases}$$
  \begin{remark}
In all of the examples we will consider, $\#_1$ will be injective. In this case, the axiom (CS5a) determines the operadic compositions $\ooo{a}{b}$ in terms of $\ooo{}{ab}$, $\#_2$, and $\#_1$, and similarly for $\oOo{a}{b}$. 
  \end{remark}
 
\subsection{Examples of connected sum}
We will now describe a connected sum on two basic modular operads: the quantum closed operad $\mathcal{QC}$ and the quantum open operad $\mathcal{QO}$ (see \cite{DJM, DJMS} for their description as modular operads).

\begin{example}\label{example_QC}
 The quantum closed modular operad $\mathcal{QC}$ is the modular envelope of the cyclic commutative operad, but has an explicit description as follows. For each finite set $C$ and each non-negative integer $g$, we define $\mathcal{QC}(C, 2g + \card(C)/2-1)$ to be a one-dimensional vector space, with generator called $C^g$. This should be seen the homeomorphism class of connected compact oriented surfaces of genus $g$ and with punctures labelled by elements of $C$. The operadic structure corresponds to sewing punctures together. See Remark \ref{remark_Gshifts} for the origin of the definition $G=2g + \card{C}/2-1$.

The connected sum is defined simply as
\begin{eqnarray*}
    C_1^{g_1}\#_2\  C_2^{g_2}&=& (C_1 \sqcup C_2)^{g_1+g_2},\\
    \#_1\, \left( C^{g}\right)&=& C^{g+1}.
\end{eqnarray*}
which increases the ``operadic'' genus $G = 2g + \card(C)/2-1$ by $1$ and $2$, respectively. Geometrically, $\#_2$ corresponds to the connected sum of surfaces and $\#_1$ to adding a handle, as on Figure \ref{fig:QCCS} involving connected sums of $C_1^1$ and $C_2^0$ with $\card (C_1)=1$ and $\card (C_2) = 3$.

\begin{figure}[h]
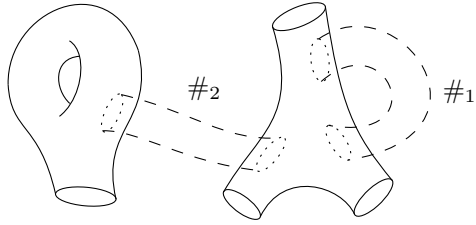

\begin{center}
\PICQConSum
\caption{Connected sum on the quantum closed operad.}
\end{center}
\label{fig:QCCS}
\end{figure}

The axioms of the connected sum are satisfied trivially, but they also have a topological interpretation, as on Figure \ref{fig:QCCSax}.

\begin{figure}[h]
\begin{center}
\PICAxCSfiveA
\caption{Axiom CS5a - cases $\ooo{}{ab} \#_2  = \#_2 (\ooo{}{ab} \otimes 1)$ and $\ooo{}{ab} \#_2  =   \#_1 \ooo{a}{b}$.}
\label{fig:QCCSax}
\end{center}
\end{figure}
\end{example}

\begin{example}\label{example_QO} The quantum open modular operad $\mathcal{QO}$ is defined as follows. A cycle $\co$ in a set $O$ is a (possibly empty) cyclic word made of several distinct elements of $O$.
The components of the modular operad $\mathcal{QO}$ are given as
\begin{gather*}
\mathcal{QO}(O,G) \equiv \mathrm{Span}_{\mathbb{k}} \left\{ \{\co_1, \ldots \co_{b}\}^g \  |\  b,g\in\mathbb{N}_0, \co_i \mathrm{\ cycle\ in\ } O, \bigsqcup_{i=1}^b \co_i=O,  G=2g+b-1\right\}.
\end{gather*}
 Geometrically, the generators correspond to homeomorphism class of a compact oriented surface with genus $g$, $b$ boundaries and punctures  on the boundaries labelled by elements of $O$. The operadic compositions correspond to sewing/self-sewing of surfaces along punctures.

Similarly to the previous example, the modular operad $\mathcal{QO}$ is the modular envelope of the cyclic associative operad $Ass$ by a result of Doubek \cite{DoubekModularEnvelope}.
 
 The connected sum is defined as
\begin{eqnarray*}
\{\co_1, \ldots \co_{b_1}\}^{g_1}\,\#_2\, \{\co'_1, \ldots \co'_{b_2}\}^{g_2} &=&\{\co_1, \ldots \co_{b_1}, \co'_1 \ldots \co'_{b_2} \}^{g_1+g_2},\\
\#_1 \left(\{\co_1, \ldots \co_{b}\}^{g}\right) &=& \{\co_1, \ldots \co_{b}\}^{g+1}.
\end{eqnarray*}
with the same geometric interpretation as in the previous example.
\end{example}

These two examples can be combined as follows.
\begin{example}\label{QOC}
Although we did not introduce colored modular operads, it is easy to see that we can straightforwardly combine the quantum closed operad and quantum open operad into a two-colored quantum open-closed operad $\mathcal{QOC}$ \cite{DJM}. It has components $\mathcal{QOC}(C, O, G)$ generated by homeomorphism classes of surfaces with closed punctures labelled by $C$ and open punctures (lying on the boundary) labelled by $O$. In this case, $G=2g + b + \card(C)/2 -1$
\end{example}

\begin{remark}\label{remark_Gshifts}
Using the above examples, we can explain the dependence of $G$ on $g$ and the shifts of $G$ in Definition \ref{def_connected_sum}. Already for the operad $\mathcal{QO}$, the operadic self-composition $\ooo{}{ab}$ can act on punctures on two different boundary components or on same boundary. Since these two cases change $g$ and $b$ differently, we are led to define $G$ as their linear combination. 

If we require the operations $\ooo{}{ij}$ and $\ooo{i}{j}$ to change $G$ by $+1$ and $0$, then for the more general quantum open-closed operad $\mathcal{QOC}$ we can choose $G = \alpha g + \frac{\alpha}{2} b + \frac{\alpha-1}{2} \card(C) + \frac{\alpha - 2}{4} \card(O) + 1-\alpha$ for any $\alpha$. Moreover, $\#_2$ will increase $G$ by $\alpha-1$ and $\#_1$ by $\alpha$. Our choice of $G$ corresponds to $\alpha = 2$, which was used in \cite{DJM} and ultimately comes from the work of Zwiebach, where he wants $G=0$ for the disc with three open punctures on the boundary \cite[eq.~3.11]{ZwiebachOC}.
\end{remark}

Similarly, there exists colored generalization of these operads coming from string field theory, let us mention the one coming form the type II superstring field theory.
\begin{example}
A four-colored operad relevant relevant to type II superstring field theory was introduced in \cite{JM}. The geometric picture here is based on super Riemannian surfaces with four kinds of punctures corresponding to the four respective sectors $NS-NS$, $NS-R$, $R-NS$ and $R-R$. The geometric representation of the connected sum would remain the same as above.
\end{example}


\subsection{Endomorphism operad and the connected sum}\label{section_endomorphism_operad}
In this section, we will describe our main example of an odd modular operad with a connected sum, the endomorphism operad. Let us first recall the definition of the unordered tensor product and positional derivatives.

\begin{definition}
For a finite set $C$ with $\card(C)=n$ and a vector space $V$, we define the \textbf{unordered tensor product} as
$$ \bigotimes_{ C} V =\left. \bigoplus_{\psi\colon C\rightarrow [n]} V^{\otimes n}\right/ \sim \, .$$ 
where the equivalence relation is given by
$i_\psi(v_1 \otimes \ldots \otimes v_n)\sim i_{\sigma\psi}(\sigma(v_1 \otimes \ldots \otimes v_n))$, where $\sigma \in \Sigma_n$ and $i_\psi\colon V^{\otimes n}\hookrightarrow \bigoplus_{\psi\colon C\rightarrow [n]}
V^{\otimes n}$ is the canonical inclusion into the $\psi$-th summand\footnote{In other words, choosing a linear order of $C$ gives an isomorphism between $\bigotimes_C V$ and $V^{\otimes n}$, with different
isomorphisms related by the corresponding permutation.}. 

The map $V^{\otimes n}\xrightarrow{\cong} \bigotimes_C V$, the inclusion $i_\psi$ followed by the natural projection, is an isomorphism. Its inverse $\bigotimes _C  V \to V^{\otimes n}$ will be denoted as $w \mapsto (w)_\psi $.
\end{definition}

Here are some useful facts about the unordered tensor product, see \cite[Definition 10]{DJM} and \cite[Lemma 4]{M}.
\begin{lemma}\label{lemmaunordered}
\begin{enumerate}
\item[]
\item For an isomorphism $\psi\colon C \xrightarrow{\cong} [n]$ and a permutation $\sigma\colon [n] \to [n]$
$$ (w)_{\sigma\psi} = \sigma (w)_\psi, \quad  w\in \bigotimes_C V\,.$$

\item Any isomorphism $\rho \colon C \xrightarrow{\cong} D$ defines an isomorphism $\rho \colon \bigotimes_C V \to \bigotimes_D V$ by
\begin{equation} \label{eq:iso_action}
    (\rho x)_{\varphi} = (x)_{\varphi\rho}, \quad   x\in \bigotimes_D V, \, \varphi\colon D \xrightarrow{\cong} [\card{(D)}]  \,.
\end{equation}
    
\item There is a canonical isomorphism
    $(\otimes_C V) \otimes (\otimes_D V)\cong \otimes_{C\sqcup D} (V)$,
    given by ordering on $C\sqcup D$ induced
    on the orderings on $C$ and $D$, i.e. by
    $$ (\bigotimes_C V) \otimes (\bigotimes_D V) 
    \xrightarrow{(-)_\psi \otimes (-)_\phi }
    V^{\otimes ( \card{(C)} + \card{(D)} )} \xleftarrow{ (-)_{\psi\sqcup \phi} } \bigotimes_{C\sqcup D} V$$
    where $\psi\sqcup \phi$ is the induced ordering on $C\sqcup D$ from $\psi\colon C \xrightarrow{\cong} [\card{(C)}]$ and $\phi\colon D \xrightarrow{\cong} [\card{(D)}]$.

    The composition $(\otimes_C V) \otimes (\otimes_D V)\cong \otimes_{C\sqcup D} (V) \cong (\otimes_D V) \otimes (\otimes_C V)$ is the monoidal symmetry
    $\tau \colon (\otimes_C V) \otimes (\otimes_D V)\to  (\otimes_D V) \otimes (\otimes_C V)$.
\end{enumerate}
\end{lemma}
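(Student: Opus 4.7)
The plan is to unwind the definition of the unordered tensor product---a quotient of labeled copies of $V^{\otimes n}$ by the relation $i_\psi(v_1\otimes\cdots\otimes v_n) \sim i_{\sigma\psi}(\sigma(v_1\otimes\cdots\otimes v_n))$---and then verify each of the three statements in turn.

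For (1), the identity $(w)_{\sigma\psi} = \sigma(w)_\psi$ is essentially a restatement of the defining equivalence. If $w$ is the class of $i_\psi(v_1\otimes\cdots\otimes v_n)$, then by the defining relation its class is also represented by $i_{\sigma\psi}(\sigma(v_1\otimes\cdots\otimes v_n))$; applying the inverse isomorphisms $(-)_\psi$ and $(-)_{\sigma\psi}$ to these two representatives gives the stated equality.

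For (2), the key point is well-definedness: the assignment $(\rho x)_\varphi := (x)_{\varphi\rho}$ must be compatible with the equivalence on $\bigotimes_D V$. Replacing $\varphi$ by $\sigma\varphi$ and using part (1),
\[
(\rho x)_{\sigma\varphi} = (x)_{\sigma\varphi\rho} = (x)_{\sigma(\varphi\rho)} = \sigma(x)_{\varphi\rho} = \sigma(\rho x)_\varphi,
\]
which is the transformation rule required for a well-defined element of $\bigotimes_D V$. Linearity is immediate, and $\rho^{-1}$ provides the inverse.

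For (3), independence of the isomorphism from the choice of $\psi$, $\phi$ is again a consequence of part (1): if $\psi$ is replaced by $\sigma\psi$ and $\phi$ by $\sigma'\phi$, then $(x)_\psi\otimes(y)_\phi$ transforms by $\sigma\otimes\sigma'$, while $\psi\sqcup\phi$ becomes $(\sigma\sqcup\sigma')(\psi\sqcup\phi)$, producing the matching transformation on $\otimes_{C\sqcup D}V$ via another application of part (1). For the statement about the monoidal symmetry, I would observe that $\phi\sqcup\psi = \tau_{|C|,|D|}\cdot(\psi\sqcup\phi)$, where $\tau_{m,n}\in\Sigma_{m+n}$ is the block transposition exchanging the first $m$ and last $n$ coordinates. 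In the graded category, this block transposition acts on $V^{\otimes m}\otimes V^{\otimes n}$ precisely as the Koszul symmetry $x\otimes y\mapsto (-1)^{|x||y|}\,y\otimes x$, so a single application of part (1) identifies the composition with $\tau$. The main obstacle is a careful bookkeeping of the symmetric group action on $V^{\otimes n}$: one has to fix once and for all whether $\sigma$ permutes positions or labels, and then consistently track the Koszul signs---especially in the final identification of $\tau_{|C|,|D|}$ with the monoidal symmetry.
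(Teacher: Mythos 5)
Your proof is correct: the coordinate-wise verification of part (1) from the defining relation, the compatibility check $(\rho x)_{\sigma\varphi}=\sigma(\rho x)_\varphi$ for part (2), and the identification $\phi\sqcup\psi=\tau_{\card{(C)},\card{(D)}}\circ(\psi\sqcup\phi)$ with the block transposition acting as the Koszul symmetry for part (3) are exactly the points that need checking. The paper itself gives no proof of this lemma (it refers to Definition 10 of Doubek--Jur\v co--M\"unster and Lemma 4 of Markl), and your unwinding of the definition is the standard argument supplied in those references, so there is nothing to add beyond the sign-convention bookkeeping you already flag.
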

For $c\in C$, it makes sense to talk about the $c$-th element
of $\bigotimes_C V^*$; for example we can contract it with
$v\in V$. This operation is captured in the following definition.
\begin{definition}
For $v\in V$ and a finite set $C$ of cardinality $n$, let us define a \textbf{positional derivative}
$$\partial^{(c)}_v \colon \bigotimes_{C \sqcup \{c \}} V^* \to \bigotimes_{C} V^*$$
by setting, for  $f\in \bigotimes_{C \sqcup \{c \}} V^*$,
\begin{equation}
    (\partial^{(c)}_v f)_\psi=  v\otimes 1_{(V^*)^{\otimes n}} (f)_{\tilde{\psi}} 
\end{equation}
where, for arbitrary $\psi\colon C \xrightarrow{\cong} [n]$, the map $\tilde\psi\colon {C \sqcup \{c \}} \xrightarrow{\cong} [n+1]$ is defined by  $\tilde\psi(c) = 1$ and $\tilde\psi(c') = \psi(c') + 1$ for $c'\in C$. On the right hand side, we see 
$v\in V$ as a map $V^* \to \Bbbk$ via $\alpha \mapsto (-1)^{\hdeg{v}\hdeg{\alpha}}\alpha(v)$. 
\end{definition}

Here we collect some of the useful properties of the positional derivative.
\begin{lemma}\label{lemmaposder}
\begin{enumerate}
    \item Under the isomorphism
    $\bigotimes_{C\sqcup \{c\} \sqcup D} V^* \cong (\bigotimes_{C\sqcup \{c\}} V^*) \otimes (\bigotimes_{ D}V^*)$, the positional derivative $\partial^{(c)}_v$ is sent to $\partial^{(c)}_v \otimes 1_{\bigotimes_D V^*}$
    
    \item   For $c \in C$, and $\rho \colon C \xrightarrow{\cong} D$ we have $\rho|_{C\setminus \{c\}} \partial^{(c)}_v =  \partial^{(\rho(c))}_v \rho$.
    
    \item The positional derivatives graded-commute, i.e. 
    $\partial^{(c)}_v \partial^{(d)}_w = (-1)^{\hdeg v \hdeg w} \partial^{(d)}_w \partial^{(c)}_v$.
\end{enumerate}
\end{lemma}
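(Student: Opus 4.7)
The plan is to prove all three assertions by unpacking the definition of the positional derivative in an adapted choice of orderings and then invoking the formal properties of the unordered tensor product from Lemma~\ref{lemmaunordered}.

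For (1), I would fix orderings $\psi\colon C\sqcup\{c\}\to[n+1]$ and $\phi\colon D\to[m]$. By Lemma~\ref{lemmaunordered}(3) their concatenation realises the canonical isomorphism $\bigotimes_{C\sqcup\{c\}\sqcup D}V^*\cong(\bigotimes_{C\sqcup\{c\}}V^*)\otimes(\bigotimes_D V^*)$, with the $D$-part occupying the last $m$ slots of $V^{*\otimes(n+m+1)}$. Evaluating $(\partial^{(c)}_v f)$ in the reduced ordering $\psi|_C\sqcup\phi$ on $C\sqcup D$ gives, by the definition, the action of $v\otimes 1$ on the first slot of $(f)_{\psi\sqcup\phi}$, leaving the trailing $m$ slots untouched. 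This is precisely the factorisation $\partial^{(c)}_v\otimes 1_{\bigotimes_D V^*}$.

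For (2), fix an ordering $\phi'\colon D\setminus\{\rho(c)\}\to[n]$ and set $\psi\coloneqq\phi'\circ\rho|_{C\setminus\{c\}}$. Applying Lemma~\ref{lemmaunordered}(2) on the left gives $(\rho|_{C\setminus\{c\}}\partial^{(c)}_v f)_{\phi'}=(\partial^{(c)}_v f)_\psi=(v\otimes 1)(f)_{\tilde\psi}$; a second application of the same lemma on the right gives $(\partial^{(\rho(c))}_v\rho f)_{\phi'}=(v\otimes 1)(\rho f)_{\widetilde{\phi'}}=(v\otimes 1)(f)_{\widetilde{\phi'}\circ\rho}$. A direct check shows $\widetilde{\phi'}\circ\rho=\tilde\psi$: both send $c\mapsto 1$ and any $c'\in C\setminus\{c\}$ to $\psi(c')+1$. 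Hence the two sides coincide.

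For (3), I would fix an ordering $\psi\colon C\to[n]$ and iterate the definition twice. The first application of $\partial^{(d)}_w$ in a suitably chosen ordering produces $(w\otimes 1)(f)_\eta$, where $\eta\colon C\sqcup\{c,d\}\to[n+2]$ places $d$ in slot $1$ and $c$ in slot $2$; applying $\partial^{(c)}_v$ next contracts $v$ with what was the second slot, so that $(\partial^{(c)}_v\partial^{(d)}_w f)_\psi=(v\otimes w\otimes 1)(f)_\eta$. Iterating in the opposite order yields $(\partial^{(d)}_w\partial^{(c)}_v f)_\psi=(w\otimes v\otimes 1)(f)_{\eta'}$ with $\eta'=(1\,2)\circ\eta$, obtained by interchanging the positions of $c$ and $d$. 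By Lemma~\ref{lemmaunordered}(1), $(f)_{\eta'}$ differs from $(f)_\eta$ by the Koszul sign of the transposition acting on $V^{*\otimes(n+2)}$, and unpacking the contractions the residual sign is $(-1)^{\hdeg{\alpha_c}\hdeg{\alpha_d}}$, where $\alpha_c,\alpha_d$ are the relevant slots of $f$. On the support of the pairings $\alpha_c(v),\alpha_d(w)$ the duality convention $V\otimes V^*\to\Bbbk$ forces $\hdeg{\alpha_c}\hdeg{\alpha_d}\equiv\hdeg v\hdeg w\pmod 2$, yielding the expected $(-1)^{\hdeg v\hdeg w}$.

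The main technical obstacle is the sign bookkeeping in part (3): one must combine the Koszul signs coming from the contraction rule $\alpha\mapsto(-1)^{\hdeg v\hdeg\alpha}\alpha(v)$ with the Koszul sign of the transposition, and observe that their total effect reduces to exactly $(-1)^{\hdeg v\hdeg w}$ once one uses that non-vanishing of the pairings fixes the degrees of $\alpha_c,\alpha_d$ in terms of those of $v,w$. Parts (1) and (2) are essentially formal once the orderings are chosen to make the claimed equality transparent.
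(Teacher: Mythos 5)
Your proposal is correct, and since the paper states Lemma \ref{lemmaposder} without proof, your argument is exactly the routine unwinding the authors leave implicit: adapted orderings plus Lemma \ref{lemmaunordered} for (1) and (2), and for (3) the comparison of the two iterated derivatives through the transposition of the first two slots, whose Koszul sign $(-1)^{\hdeg{\alpha_c}\hdeg{\alpha_d}}$ becomes $(-1)^{\hdeg{v}\hdeg{w}}$ on the support of the pairings. Only beware that your shorthand $(v\otimes w\otimes 1)(f)_\eta$ in (3) must be read as the sequential composite $(v\otimes 1)\circ(w\otimes 1)$, with $w$ contracting the $d$-slot and $v$ the $c$-slot; taken literally as a Koszul-signed tensor product of maps it would pair the wrong slots and carry an extra sign, though this does not affect your final comparison.
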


Now, we can turn to the definition of an endomorphism modular operad.

\begin{definition}
Let $(V,d)$ be a dg vector space which is degree-wise finite-dimensional. An \textbf{odd symplectic form} $\omega\colon V\otimes V \rightarrow \Bbbk$ of degree $-1$ is a nondegenerate graded-antisymmetric bilinear map\footnote{Note, this means that $\omega(u,v)\neq 0$ implies $|u|+|v|=1$ and $\omega(v,u)=(-1)^{|v| |u|+1}\omega(u,v)=-\omega(u,v)$.}. If $d(\omega)=0$, i.e.
$$\omega\circ (d\otimes 1_V + 1_V\otimes d)=0,$$
we call $(V,d, \omega)$ a \textbf{dg symplectic vector space}.
\end{definition}

If $\lbrace e_l\rbrace$ is a homogeneous basis of $V$  and  $\omega_{kl} = \omega(e_k, e_l)$, we define
$$e^k= \sum_{l}^{} (-1)^{|e_l|}\omega^{kl} e_l\,. $$

Note that $\omega^{kl}$, defined by $\sum_l \omega^{kl} \omega_{lm} = \delta^k_m$, is well defined, for $V$ degree-wise finite-dimensional. This is because the infinite matrix $\omega_{ij}$ is nonzero only in finite blocks corresponding to $V_k$ and $V_{1-k}$, and we only need to invert those blocks. Similarly, the sum in the definition of $e^k$ is well defined, since it only has finite number of nonzero terms for fixed $k$.  The fact that $\omega$ is degree $-1$ implies $|e^k|=1-|e_k|$. 

\begin{definition}\label{def_endomorph}
The \textbf{odd endomorphism modular operad}  $\mathcal{E}_V$ is the odd modular operad defined by\footnote{Note that the tensor powers of $V^*$ are not degree-wise finite-dimensional, even for degree-wise finite-dimensional $V^*$.}
$$\mathcal{E}_V (C,G)= \bigotimes_C V^*\,,$$
 with an action of $\rho\colon C\rightarrow D$ given by \eqref{eq:iso_action}.

The compositions and self-compositions are defined as follows. If $f\in \mathcal{E}_V(C_1\sqcup\{a\},G_1)$ and $g\in\mathcal{E}_V(C_2\sqcup\{b\},G_2)$,
then 
$$(f\oOo{a}{b}g) =  \sum_k (-1)^{\hdeg f \hdeg{e^k}} (\partial^{(a)}_{e_k} f) \otimes (\partial^{(b)}_{e^k} g),$$
where we use the canonical isomorphism  $(\otimes_{C_1} V^*) \otimes (\otimes_{C_2} V^*)\cong \otimes_{C_1\sqcup C_2} (V^*)$.

The self-composition for $f\in \bigotimes_{C\sqcup \{a,b\}}V^*$ is given by 
\begin{equation} \label{eq:selfcompEndV} \oOo{}{ab}f =  \sum_k \partial^{(a)}_{e_k} \partial^{(b)}_{e^k} f.\end{equation}
This is well defined because $f$ is a finite sum of tensor products of elements of $V^*$.

This operad is equipped with a differential given for $f\in\bigotimes_C V^*$ by
$$(df)_\psi = d_{(V^*)^{\otimes \card C}} (f)_\psi\, .$$
where the differential on $\alpha \in V^*$ is $(d\alpha)(v) = (-1)^{\alpha +1} \alpha(dv)$
is defined so that the pairing $V^*\otimes V \to \Bbbk$ is a chain map.
\end{definition}
\begin{lemma}
This defines a structure of an odd modular operad on $\mathcal E_V$.
\end{lemma}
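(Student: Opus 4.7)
The plan is to verify, one by one, (i) well-definedness and the degree of the operations, (ii) the equivariance axioms (MO1) and (MO2), (iii) compatibility with the differential, (iv) the odd graded-symmetry (MO3), and finally (v) the ``associativity'' axioms (MO4)--(MO7) with their odd-case sign modifications.

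First I would check that the sums defining $f \oOo{a}{b} g$ and $\oOo{}{ab} f$ are finite and well-defined. Each element of $\bigotimes_C V^*$ is supported in a finite-dimensional subspace, and $\omega$ restricts to finite-dimensional blocks pairing degree $k$ with degree $1-k$, so only finitely many $k$ contribute. A direct degree count gives $|\oOo{a}{b}| = |e_k| + |e^k| = |e_k| + (1 - |e_k|) = 1$, and likewise for $\oOo{}{ab}$, matching the required degree of an odd modular operad. Independence of the basis $\{e_k\}$ reduces to the basis-independence of the Casimir element $\sum_k e_k \otimes e^k \in V\otimes V$ of the odd symplectic form.

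Next, equivariance (MO1) and (MO2) is immediate from Lemma~\ref{lemmaposder}(2) together with the action formula~\eqref{eq:iso_action} and the monoidal-symmetry statement of Lemma~\ref{lemmaunordered}(3). Compatibility with the differential reduces, for each axiom, to the single identity $\sum_k \bigl( d e_k \otimes e^k + (-1)^{|e_k|} e_k \otimes d e^k \bigr) = 0$ in $V\otimes V$, which is the dual reformulation of $d\omega = 0$ combined with nondegeneracy of $\omega$.

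For the graded symmetry (MO3), the argument rests on the graded-antisymmetry of $\omega$: using $\omega_{kl} = -(-1)^{|e_k||e_l|}\omega_{lk}$ together with the definition of $e^k$, one checks that $\sum_k e_k \otimes e^k$ transforms in the correct way under the swap $\tau$, which, combined with Lemma~\ref{lemmaposder}(3), produces the claimed sign. Each of (MO4)--(MO7) then reduces to two manipulations: graded-commutation of positional derivatives via Lemma~\ref{lemmaposder}(3), and the same Casimir symmetry; writing both sides as double sums of positional derivatives applied to the input tensors and collecting Koszul signs yields the identity.

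The main obstacle I expect is systematic sign bookkeeping: the Koszul signs produced by sliding positional derivatives past one another and past tensor factors of $f$, $g$ or $h$, the fact that $|e^k| = 1 - |e_k|$ so partial sums mix even- and odd-degree vectors, and the extra minus signs introduced by the odd twist in (MO4)--(MO7). A natural way to avoid ad hoc guesses is to fix orderings $\psi$ on the index sets via Lemma~\ref{lemmaunordered}(1), carry out every computation inside $V^{\otimes n}$, and only at the end re-identify the result in the relevant unordered tensor product.
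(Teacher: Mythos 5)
Your plan is correct and is essentially the paper's own argument: the paper (besides a shortcut identifying the operations with Markl's standard odd endomorphism operad) verifies the odd axioms directly from Lemma~\ref{lemmaunordered} and Lemma~\ref{lemmaposder}, with (MO3) reduced to the symmetry $\sum_k e_k\otimes e^k=\sum_l e^l\otimes e_l$ exactly as you propose — note only that this identity lives in the direct product $\prod_i V_i\otimes V_{1-i}$ rather than in $V\otimes V$. One small attribution fix: the sign in (MO3) comes from the monoidal-symmetry statement, Lemma~\ref{lemmaunordered}(3), not from Lemma~\ref{lemmaposder}(3), since there the two positional derivatives act on different tensor factors; Lemma~\ref{lemmaposder}(3) is what you need for (MO4)--(MO7), as you say.
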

\begin{proof}
These operations agree with the \emph{standard definition} of an odd endomorphism modular operad of Markl, i.e.
\cite[Equations (12b) and (13b)]{M}, just that we use $\bigotimes_C (V^*)$ instead of $(\bigotimes_C V)^*$. This is because the partial derivative $ \partial^{(a)} \colon  V\otimes\bigotimes_{C\sqcup \{a\} }V^* \to \bigotimes_C V^*$ is the composition $V\otimes \bigotimes_{C\sqcup \{a\} }V^*  \cong V\otimes V^* \otimes \bigotimes_{C}V^* \xrightarrow{\mathrm{ev}\otimes 1}  \bigotimes_C V^*  $. We get the standard definition because our the degree 1 tensor $e_i\otimes e^i$ comes from left in the definition of $\oOo{a}{b}$ and $\oOo{}{ab}$.

Alternatively, the (odd versions of) axioms from Definition \ref{DEFModOp} follow easily from Lemma \ref{lemmaunordered} and Lemma \ref{lemmaposder}. 
For example, the axiom (MO3) is
$$ \oOo{a}{b} (f\otimes g) =  \partial^{(a)}_{e_k} \otimes \partial^{(b)}_{e^k} (f \otimes g) = (-1)^{\hdeg f \hdeg g}
\partial^{(a)}_{e_k} \otimes \partial^{(b)}_{e^k} \tau (g \otimes f) = (-1)^{\hdeg f \hdeg g}
( \tau \circ \partial^{(b)}_{e^k} \otimes \partial^{(a)}_{e_k})  (g \otimes f)$$
 and $\sum_k e_k\otimes e^k = \sum_{kl} (-1)^{\hdeg {e_l}} \omega^{kl} e_k \otimes e_l = \sum_{l} e^l \otimes e_l $,
 which holds in the direct product $\prod_i V_i \otimes V_{1-i}$. Thus, using Item 3 of Lemma \ref{lemmaunordered}, we get that the right hand side is indeed  $(-1)^{\hdeg f \hdeg g} \oOo{b}{a} (g\otimes f)$.
\end{proof}

Now, we can define the connected sum on the endomorphism operad.
\begin{definition}
Define $$\#_1 \colon \mathcal{E}_V(C,G) \to \mathcal{E}_V(C,G+2)$$ to be the identity on $\bigotimes_C V^*$ and define
$$\#_2 \colon \mathcal{E}_V(C_1,G_1) \otimes \mathcal{E}_V(C_2,G_2) \to \mathcal{E}_V(C_1\sqcup C_2,G_1+G_2 +1)$$ to be the canonical isomorphism
$(\otimes_{C_1} V^*) \otimes (\otimes_{C_2} V^*)\cong \otimes_{C_1\sqcup C_2} (V^*)$.
\end{definition}

\begin{lemma}\label{connected_sum_via_concatenation}
The odd modular operad $\mathcal{E}_V$, with the above defined operations $\#_1$ and $\#_2$, is an odd modular operad with a connected sum.
\end{lemma}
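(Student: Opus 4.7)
The plan is to verify axioms (CS1)--(CS6) directly, exploiting that $\#_1$ is the identity map on $\bigotimes_C V^*$ and that $\#_2$ is the canonical concatenation isomorphism of Item 3 of Lemma \ref{lemmaunordered}. Since $\#_1$ is the identity, every axiom in which $\#_1$ appears (as well as every axiom whose underlying map is just concatenation) reduces to a statement about positional derivatives and the canonical isomorphisms, directly addressable by Lemmas \ref{lemmaunordered} and \ref{lemmaposder}. Concretely, axiom (CS1) is the naturality of concatenation under bijections (Item 2 of Lemma \ref{lemmaunordered}); (CS2) is the content of Item 3 of Lemma \ref{lemmaunordered}; (CS3) is the associativity of concatenation combined with the triviality of composing identities; and (CS4) together with (CS5b) reduce to tautologies, because both sides collapse to applying $\oOo{}{ab}$ (respectively $\oOo{a}{b}$) to the underlying element of $\bigotimes V^*$.

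The substantive content is axiom (CS5a). My plan is to expand $\oOo{}{ab} = \sum_k \partial^{(a)}_{e_k}\partial^{(b)}_{e^k}$ applied to $\#_2(p\otimes p')\in \bigotimes_{C\sqcup C'} V^*$, then split into the four cases according to where $a$ and $b$ lie. If $a,b\in C$, Item 1 of Lemma \ref{lemmaposder} pushes both derivatives into the first tensor factor, giving $(\oOo{}{ab} p)\#_2 p'$. If $a,b\in C'$, the same item pushes them into the second factor; the extra sign $(-1)^{|p|}$ of the odd-case formula comes from sliding the composite operator $\partial^{(a)}_{e_k}\partial^{(b)}_{e^k}$, of total degree $|e_k|+|e^k|=1$, past $p$. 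If $a\in C$ and $b\in C'$, exactly one derivative lands on each factor, so the sum becomes $\sum_k (\partial^{(a)}_{e_k} p)\otimes (\partial^{(b)}_{e^k} p')$; by the very definition of the composition in $\mathcal E_V$ this equals $p\oOo{a}{b} p'$, and since $\#_1$ is the identity, this matches $\#_1(p\oOo{a}{b} p')$. The symmetric case $b\in C,a\in C'$ follows from the identity $\sum_k e_k\otimes e^k = \sum_k e^k\otimes e_k$ in $V\otimes V$, combined with graded commutativity of positional derivatives (Item 3 of Lemma \ref{lemmaposder}) and (MO3); these reshuffle the result into $\#_1(p\oOo{b}{a} p')$.

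Axiom (CS6) is handled by the same strategy. I would expand $p\oOo{a}{b}(p'\#_2 p'') = \sum_k (\partial^{(a)}_{e_k} p)\otimes \partial^{(b)}_{e^k}(p'\otimes p'')$ and use Item 1 of Lemma \ref{lemmaposder} to send $\partial^{(b)}_{e^k}$ into the factor that contains $b$. When $b\in C'$ this directly yields $(p\oOo{a}{b} p')\#_2 p''$. When $b\in C''$, the same identification first requires the monoidal symmetry $\tau$ swapping $p$ and $p'$; the Koszul sign $(-1)^{|p||p'|+|p'|}$ in the odd-case formula appears as the product of the Koszul sign $(-1)^{|p||p'|}$ of $\tau$ and the sign $(-1)^{|p'|}$ picked up by sliding the degree-$1$ contraction $\partial^{(b)}_{e^k}$ past $p'$.

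The main obstacle is not conceptual but purely combinatorial: all signs in the odd-case formulas must be tracked simultaneously. The key observation that unifies them is that $|e_k|+|e^k|=1$, so that every contraction of a paired $(e_k,e^k)$ behaves as a single homogeneous operator of degree $+1$; once this is noted, each sign in (CS5a) and (CS6) appears directly as the Koszul sign of commuting this degree-$1$ operator past one of the tensor arguments, and no further computation is needed.
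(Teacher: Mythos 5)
Your proposal is correct and follows essentially the same route as the paper: (CS1) from the action of bijections, (CS2) from Item 3 of Lemma \ref{lemmaunordered}, (CS3) from associativity of the tensor product, (CS4)/(CS5b) trivially since $\#_1$ is the identity, and (CS5a)/(CS6) by commuting the positional derivatives $\partial^{(a)}_{e_k}\partial^{(b)}_{e^k}$ through $\#_2$ via Item 1 of Lemma \ref{lemmaposder}. The only difference is that you spell out the Koszul-sign bookkeeping (using $\hdeg{e_k}+\hdeg{e^k}=1$), which the paper leaves implicit.
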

\begin{proof}
(CS1) follows easily from the definition of the action of isomorphisms \eqref{eq:iso_action}. (CS2) follows from item 3 of Lemma \ref{lemmaunordered}. (CS3) follows from associativity of the tensor product. (CS4) and (CS5b) are trivial, since $\#_1$ is the identity.  The remaining axioms follow from the Lemma \ref{lemmaposder}, for example (CS5a) gives
$$ \oOo{a}{b} \#_2= \sum_{k} \partial^{(a)}_{e_k} \partial^{(b)}_{e^k} \#_2 $$
and commuting the positional derivatives through $\#_2$, we get the four possibilities via Item 1 of Lemma \ref{lemmaposder}.
\end{proof}

\begin{remark}
To encode a modular operad $\oP$, it is enough to keep the spaces $\oP([n], G)$. The operations then involve a choice of ordering on e.g. $[n_1] \sqcup [n_2]$ for $\#_2$. Choosing the ordering by placing $[n_1]$ before $[n_2]$, the operadic structure map of $\mathcal E_V$ acquire a particularly simple form \cite[III.D, E]{DJM}; the connected sum $\#_2$ turns into the  identification
\[ \mathcal E([n_1], G_1) \otimes \mathcal E([n_2], G_2) = (V^*)^{\otimes n_1}\otimes (V^*)^{\otimes n_2} \xrightarrow[\#_2]{\cong} (V^*)^{\otimes (n_1 + n_2)} = \mathcal E([n_1+n_2], G_1+G_2 + 1)\,. \]
Since we replaced the category of corollas by its skeleton $\{ ([n], G)\}$, this version of a modular operad is usually called skeletal.
\end{remark}

\section{Beilinson-Drinfeld algebras and the connected sum}
 In \cite[Section~5]{BarannikovModopBV}, Barannikov introduced a dg Lie algebra structure on the (shifted) space of formal $\oP$-functions, for a modular operad $\oP$. If $\mathcal P$ is endowed with a connected sum, this space of functions acquires a commutative product and becomes a Beilinson-Drinfed algebra, a close relative of a Batalin-Vilkovisky algebra.

\subsection{Beilinson-Drinfeld algebras}\label{sec_connected_sum_ME}
 Bielinson-Drinfeld algebras, or BD algebras for short, appeared in the work of Beilinson and Drinfeld \cite{BeilinsonDrinfeld}, see also \cite{gwilliam-thesis, CostelloGwilliam}.
\begin{definition}\label{BV_algebra}
	A \textbf{BD algebra} is a graded commutative associative algebra on a graded module $\mathcal F$ over $\Bbbk [[\varkappa]]$, flat over $\Bbbk[[\varkappa]]$, with a bracket $\{, \}\colon \mathcal F^{\otimes 2}\to \mathcal F$ of degree 1 that satisfies
	\begin{align}
	\{X, Y\} &= -(-1)^{(\hdeg X +1) (\hdeg Y +1)} \{Y, X\} \, ,\nonumber\\
	\label{eq:Jacobi}
	\{ X, \{Y, Z\}\} &= \{ \{X, Y\}, Z\} + (-1)^{(\hdeg X+1)(\hdeg Y +1) } \{Y, \{X, Z\}\}\,,  \\
	\{X, YZ\} &= \{X, Y\}Z + (-1)^{(\hdeg X +1)\hdeg Y} Y \{X ,Z  \}\, , \nonumber
	\end{align}
	and a square zero operator $\Delta \colon \mathcal F \to \mathcal F$ of degree 1 such that 
	\begin{equation}\label{eq:BVgood}
	\Delta (XY) = (\Delta X) Y + (-1)^{\hdeg{X}} X \Delta Y + (-1)^{\hdeg X} \varkappa\{X, Y \}\, .
	\end{equation}
	If $\mathcal F$ is also equipped with a differential, we require $\Delta$ and the bracket to commute with it. For algebras with unit $1$, we will require $\Delta(1) = 0$. 
\end{definition}

\subsection{Formal functions associated to a modular operad}\label{ssec:formalfunctions}
Let us consider a modular operad $\oP$  and an odd modular operad $\oQ$. Define

\begin{align*}
    \Fun(\oP,\oQ)(n,G)  &=\left(\oP(n,G) \otimes \oQ(n,G)\right)^{\Sigma_n},\\
    \Fun(\oP,\oQ)       &= \prod_{n\geq 0}\prod_{G \geq 0}  \Fun(\oP,\oQ)(n,G).
\end{align*}
In \cite{BarannikovModopBV}, Barannikov introduced the following operations of degree 1, defined on components
\begin{eqnarray*}
 d&\colon& \Fun(\oP,\oQ)(n,G) \rightarrow \Fun(\oP, \oQ)(n,G), \\
    \Delta&\colon&\Fun(\oP,\oQ)(n+2,G) \rightarrow \Fun(\oP, \oQ)(n,G+1), \\
   \{-,-\}&\colon& \Fun(\oP,\oQ)(n_1+1,G_1)\otimes \Fun(\oP,\oQ)(n_2+1,G_2) \rightarrow \Fun(\oP, \oQ)(n_1+n_2,G_1+G_2),
\end{eqnarray*}
by
\begin{eqnarray}
d=d_{\oP}\otimes 1 - 1 \otimes d_{\oQ}, \nonumber \\
\Delta = (\ooo{}{ab}\otimes \oOo{}{ab})(\theta\otimes \theta) \label{BVlaplace_jako_selfcomposition},
\end{eqnarray}
for arbitrary bijection\footnote{ We write $\theta\otimes \theta$ instead of $\oP(\theta)\otimes \oQ(\theta)$ for brevity.}  $\theta\colon  [n+2] \xrightarrow{\cong} [n]\sqcup\{a,b\}$; and
\begin{equation}\label{definice_zavorky}
   \{X,Y\} = (-1)^{|X|}\cdot 2\sum_{C_1,C_2}(\ooo{a}{b}\otimes \oOo{a}{b})(\theta_1\otimes \theta_2 \otimes \theta_1\otimes \theta_2) (1\otimes \tau \otimes 1)(X\otimes Y),
\end{equation}
where we sum over all disjoint decompositions $C_1\sqcup C_2 = [n_1+n_2]$, such that $\card(C_1)=n_1$, $\card(C_2)=n_2$, the bijections\footnote{No summation over those.}  $\theta_1\colon [n_1+1]  \xrightarrow{\cong} C_1\sqcup \{ a\}$, $\theta_2\colon [n_2+1] \xrightarrow{\cong} C_2\sqcup \{b\}$ are chosen arbitrarily, and $\tau$ is the monoidal symmetry. These operations are then extended to the whole $\Fun(\oP, \oQ)$.

\begin{theorem}[{\cite{BarannikovModopBV}}]
The maps $d$, $\Delta$ and $\{\,,\,\}$ are well defined, independent of the choice of $\theta$, $\theta_1$, $\theta_2$ and they satisfy the following properties
\begin{eqnarray}
d^2 &=& 0, \nonumber \\
d \{,\} + \{,\}(d\otimes 1 + 1 \otimes d)&=&0, \nonumber \\
\Delta^2 &=& 0, \nonumber \\
\Delta \{,\} +\{,\}(\Delta\otimes 1+1\otimes \Delta)&=&0, \nonumber \\
\Delta d + d \Delta &=& 0, \nonumber
\end{eqnarray}
and the bracket satisfies
\begin{eqnarray*}
\{ X, Y\} &=&- (-1)^{(\hdeg{X}+1)(\hdeg{Y}+1)}\{Y,X\},\\
\{ X, \{Y, Z\}\} &=& \{ \{X, Y\}, Z\} + (-1)^{(\hdeg X+1)(\hdeg Y +1) } \{Y, \{X, Z\}\}.
\end{eqnarray*}
\end{theorem}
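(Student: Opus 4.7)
The plan is to reduce every claimed identity to one of the axioms (MO1)--(MO7) of the (even/odd) modular operad, after careful sign bookkeeping.

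First I would establish independence of the choice of the bijections $\theta$, $\theta_1$, $\theta_2$. Any two such choices differ by a permutation, and the equivariance axioms (MO1)--(MO2) (together with their odd analogues for $\oQ$) move that permutation through the composition; restricting to $\Sigma_n$-invariants then absorbs it, so the formulas descend to well-defined maps on $\Fun(\oP,\oQ)$. The same argument, applied in reverse, shows that the outputs land in the appropriate invariants, so that the operations indeed close on $\Fun(\oP,\oQ)$.

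The identities involving $d$ are essentially formal. The relation $d^2=0$ follows from $d_\oP^2 = 0 = d_\oQ^2$ together with the Koszul sign, which turns the two cross-terms in $(d_\oP\otimes 1 - 1\otimes d_\oQ)^2$ into opposite signs. The compatibility $\Delta d + d\Delta = 0$ uses only that the operadic self-compositions $\ooo{}{ab}$ and $\oOo{}{ab}$ are chain maps (with a sign cancellation coming from the odd degree of $\oOo{}{ab}$), and similarly $d\{,\} + \{,\}(d\otimes 1 + 1\otimes d) = 0$ reduces to $\ooo{a}{b}$ and $\oOo{a}{b}$ being chain maps.

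The deeper identities are purely operadic. For $\Delta^2=0$, expanding \eqref{BVlaplace_jako_selfcomposition} produces a sum involving $\ooo{}{ab}\ooo{}{cd}\otimes \oOo{}{ab}\oOo{}{cd}$; swapping the pairs $(ab)\leftrightarrow(cd)$ via axiom (MO4) on $\oP$ and its signed odd analogue on $\oQ$ yields the same summand with opposite overall sign, giving cancellation after $\Sigma$-symmetrisation. The compatibility $\Delta\{,\} + \{,\}(\Delta\otimes 1 + 1\otimes \Delta) = 0$ and the graded Jacobi identity follow analogously from axioms (MO5)--(MO7) on $\oP$ together with their odd counterparts on $\oQ$: these axioms encode precisely how $\ooo{a}{b}$ intertwines with $\ooo{}{cd}$ and with itself, and the oddness of $\oQ$ supplies exactly the signs prescribed by the statement. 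Graded antisymmetry of $\{,\}$ is axiom (MO3) applied simultaneously to $\oP$ and $\oQ$, combined with the prefactor $(-1)^{|X|}$ and the monoidal symmetry $\tau$ appearing in \eqref{definice_zavorky}; the minus sign in the odd (MO3) combines with the prefactor to give the claimed $-(-1)^{(\hdeg{X}+1)(\hdeg{Y}+1)}$.

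The main obstacle will be sign bookkeeping: Koszul signs from $\tau$, the prefactors in the definitions, the signs $(-1)^{\dg{x}\dg{y}}$ from (MO3), and the extra signs in the odd versions of (MO4)--(MO7) must all conspire to produce exactly the signs in the statement. A secondary combinatorial task is the Jacobi identity, where one must match the three disjoint-decomposition sums $C_1\sqcup C_2\sqcup C_3$ coming from the two iterations of $\{,\}$; this matching is forced by (MO6)--(MO7), which allow one to split an iterated composition $\ooo{a}{b}(1\otimes\ooo{c}{d})$ into $\ooo{c}{d}(\ooo{a}{b}\otimes 1)$ and vice versa, grouping the summands into the three terms of the Jacobi relation.
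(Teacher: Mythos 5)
Your plan is correct and is essentially the argument the paper relies on: the paper gives no proof of this theorem itself, deferring to \cite[Section~5]{BarannikovModopBV} and \cite[Theorem~20]{DJM}, and those proofs proceed exactly as you describe --- equivariance (MO1)--(MO2) together with $\Sigma_n$-invariance for well-definedness and independence of $\theta$, $\theta_1$, $\theta_2$, the chain-map property of the compositions for the identities involving $d$, and the even/odd sign mismatch in (MO3)--(MO7) for antisymmetry, $\Delta^2=0$, the $\Delta$--bracket compatibility and the Jacobi identity. The only caveat is the one you already flag, namely carrying the prefactor $(-1)^{|X|}$ and the Koszul signs through consistently.
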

See \cite[Section~5]{BarannikovModopBV} and also \cite[Theorem~20]{DJM} for a more detailed proof (our bracket $\{X, Y\}$ equals $(-1)^{|X|}2$ times their bracket). This structure was called a \emph{generalized Batalin-Vilkovisky algebra} in \cite{DJM}, since it lacks a compatible commutative product.
\medskip

The motivation for this structure comes from the fact that morphisms from the  \emph{Feynman transform of $\oP$} to $\oQ$ are in bijection with degree-0 elements $S\in \Fun(\oP, \oQ)$ that satisfy the quantum master equation $dS + \Delta S + \frac 12 \{S, S\} = 0$, see \cite{BarannikovModopBV, DJM}. 

\subsection{Connected sum and a commutative product}
Now we introduce a commutative product and an operation $\sharp \colon \Fun(\oP,\oQ) \to \Fun(\oP,\oQ)$, coming from $\#_2$ and $\#_1$.
\begin{definition}\label{product_on_con_space} Let $\oP$ be a modular operad and $\oQ$ an odd modular operad. Moreover, assume each of them equipped with a connected sum. 
Define a \textbf{product} $$\star\colon \Fun(\oP,\oQ)(n_1,G_1) \otimes \Fun(\oP,\oQ)(n_2,G_2)\rightarrow \Fun(\oP,\oQ)(n_1+n_2,G_1+G_2+1)$$ as
\begin{equation}\label{connected_sum_on_CON}
    \star = \sum_{C_1,C_2}(\#_2 \otimes \#_2 )(\theta_1\otimes \theta_2\otimes \theta_1\otimes \theta_2)(1\otimes \tau\otimes 1),
    \end{equation}
where, as before, the sum runs over all disjoint decompositions $C_1 \sqcup C_2 = [n_1+n_2]$, such that $\card(C_1)=n_1$, $\card(C_2)=n_2$, the bijections  $\theta_1\colon [n_1] \xrightarrow{\cong} C_1$, $\theta_2\colon [n_2] \xrightarrow{\cong} C_2$ are chosen arbitrarily, and $\tau$ is the monoidal symmetry.

We also define the \textbf{operator $\sharp\colon \Fun(\oP,\oQ)(n,G)\rightarrow \Fun(\oP,\oQ)(n,G+2)$} as
\begin{equation}\label{hbar_operator}    \sharp = \#_1 \otimes \#_1\, . \end{equation}
Finally, we extend $\star$ and $\sharp$ on $\Fun(\oP, \oQ)$ linearly.
\end{definition}
\begin{lemma}
The maps $\star$, $\sharp$ are well defined  and $\star$ doesn't depend on the choice of $\theta_1, \theta_2$.
\begin{proof}
The product $\star$ is well defined since only finite number of terms contribute to the component $\Fun(\oP, \oQ)(n, G)$ of the result, i.e. those components $(n_1, G_1)$ and $(n_2, G_2)$ such that $n= n_1 + n_2$ and $G = G_1 + G_2 + 1$. The result is independent of choices of $\theta_i$, because different choices of $\theta_i$ differ by precomposition with a permutation of $[n_i]$, under which $\Fun(\oP,\oQ)(n_i,G_i)$ is invariant.
\end{proof}
\end{lemma}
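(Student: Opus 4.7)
The plan is to verify three separate claims: that $\star$ takes values in $\Fun(\oP,\oQ)$ component by component, that $\star$ does not depend on the auxiliary bijections $\theta_1,\theta_2$, and that $\sharp$ is well defined.

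First I would check the target component and finiteness. Because $\Fun(\oP,\oQ)$ is the direct product over pairs $(n,G)$, for an arbitrary $(n,G)$ the component $\Fun(\oP,\oQ)(n,G)$ only receives contributions from pairs $\bigl((n_1,G_1),(n_2,G_2)\bigr)$ satisfying $n_1+n_2=n$ and $G_1+G_2+1=G$; since all indices are non-negative (resp.\ half-)integers, only finitely many such pairs contribute, so there is no convergence issue. The codomain of each summand is correct because $\#_2$ shifts $G$ by $+1$ on both the $\oP$ and $\oQ$ sides, and the identification $C_1\sqcup C_2=[n_1+n_2]$ yields the expected arity.

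Second I would verify $\Sigma_{n_1+n_2}$-invariance of the image. For any $\sigma\in\Sigma_{n_1+n_2}$ the restrictions $\sigma|_{C_i}$ define bijections $C_i\to\sigma(C_i)$, and axiom (CS1) lets me push these through $\#_2$, so the summand indexed by $(C_1,C_2)$ is mapped to the summand indexed by $(\sigma(C_1),\sigma(C_2))$. As $(C_1,C_2)$ ranges over all ordered decompositions of $[n_1+n_2]$ with the prescribed cardinalities, so does $(\sigma(C_1),\sigma(C_2))$, which gives invariance. Independence of the chosen bijections $\theta_1,\theta_2$ then follows because any other choice differs by precomposing with permutations $\sigma_i\in\Sigma_{n_i}$; by the $\Sigma_{n_i}$-invariance of $X\in\Fun(\oP,\oQ)(n_1,G_1)$ and $Y\in\Fun(\oP,\oQ)(n_2,G_2)$, these permutations act trivially on each summand.

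Finally, for $\sharp=\#_1\otimes\#_1$: axiom (CS1) for $\#_1$ says that $\#_1$ commutes with the action of any bijection, so on each component $\Fun(\oP,\oQ)(n,G)$ the tensor product $\#_1\otimes\#_1$ preserves $\Sigma_n$-invariance and lands in $\Fun(\oP,\oQ)(n,G+2)$, matching the advertised codomain; component-wise application then extends linearly to $\Fun(\oP,\oQ)$. The only real bookkeeping obstacle will be tracking the $\Sigma$-action through the switch $(1\otimes\tau\otimes 1)$ and the double appearance of each $\theta_i$ in \eqref{connected_sum_on_CON}, but this is routine rather than a genuine difficulty.
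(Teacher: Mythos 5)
Your argument is correct and follows essentially the same route as the paper: finiteness of the contributions to each component $(n,G)$ gives well-definedness, and independence of $\theta_1,\theta_2$ follows because different choices differ by precomposition with permutations of $[n_i]$, which act trivially on the $\Sigma_{n_i}$-invariant arguments. Your additional explicit check of $\Sigma_{n_1+n_2}$-invariance of the output via (CS1) is a detail the paper leaves implicit, but it does not change the approach.
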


\begin{theorem}\label{thm_BDproperties}
 If $\oP$ and $\oQ$ are as in Definition \ref{product_on_con_space}, then $\Fun(\oP, \oQ)$, with operations $d, \Delta, \{-,-\}$, $\sharp$ and $\star$ defined above, satisfies
\begin{enumerate}[label=\arabic*.]
    \item $\star$ is a commutative associative product, i.e. on elements:
    \begin{equation}\label{vlastnosti_nasobeni}
        X\star Y = (-1)^{|X|\cdot |Y|}Y\star X \qquad \mathrm{and} \qquad (X\star Y)\star Z= X\star (Y \star Z)\, .
    \end{equation}

    \item $\Delta \star =\star (\Delta \otimes 1) + \star (1\otimes \Delta)+ \sharp \{-,-\}$,
     i.e. on elements:
    \begin{equation}\label{kompatibilita_delty_nasobeni}
        \Delta(X\star Y)=(\Delta X)\star Y+ (-1)^{|X|} X\star (\Delta Y) + (-1)^{|X|} \sharp \{X,Y\}\, .
    \end{equation}
    \item $\{-,-\}(1\otimes \star)= \star (\{-,-,\}\otimes 1)+\star(1\otimes \{-,-,\})(\tau\otimes 1)$, i.e. on elements:
    \begin{equation}\label{kompatibilita_zavorky_nasobeni}
        \{X,Y\star Z\}=\{X,Y\}\star Z + (-1)^{|X|\cdot |Y| +|Y|} Y\star \{X,Z\}\, .
    \end{equation}
    
    \item The maps $\sharp$ and $\star$ are chain maps with respect to the differential $d$.
    
    \item  The map $\sharp$ commutes with the other operations, i.e. $\Delta \sharp = \sharp\Delta$,  $\, \{-, -\} (1\otimes \sharp) = \{-, -\} (\sharp\otimes 1)  = \sharp \{-, -\}$ and $\star (1\otimes \sharp) = \star (\sharp\otimes 1) = \sharp\star$. On elements, this gives
    \begin{align}
        \Delta (\sharp X) &= \sharp (\Delta X), \\
        \{X, \sharp Y\} = \{\sharp X, Y\} &= \sharp \{ X, Y\}, \\
        X\star(\sharp Y) =(\sharp X) \star Y & = \sharp (X\star Y). \label{eq:starsharpcomp}
    \end{align}
    
\end{enumerate}
\begin{proof}
Let $X=\sum_i x^i_{\oP}\otimes x^i_{\oQ} \in \Fun(\oP, \oQ)(n_X,G_X)$, where $x^i_{\oP}\in \oP(n_X,G_X)$ and $x^i_{\oQ}\in \oQ(n_X,G_X)$. For sake of brevity, we will omit the summation over $i$ (including the index) from the notation. Hence, we will write $X=x_{\oP}\otimes x_{\oQ}$. Similarly, we write $Y=\sum_i y^i_{\oP}\otimes y^i_{\oQ}= y_{\oP}\otimes y_{\oQ}$ and $Z=\sum_i z^i_{\oP}\otimes z^i_{\oQ}= z_{\oP}\otimes z_{\oQ}$ where $y^i_{\oP}\in \oP(n_Y,G_Y)$ etc. 
\medskip

\textbf{ The point 1.} follows from (CS1), (CS2) and (CS3). For commutativity:

\begin{eqnarray*}
\mkern-20muX\star Y \mkern-20mu &=& \mkern-18mu \sum_{C_1,C_2} (-1)^{|x_{\oQ}|\cdot |y_{\oP}|} (\theta_1 x_{\oP}\#_2\,  \theta_2 y_{\oP})\otimes (\theta_1 x_{\oQ}\#_2\,  \theta_2 y_{\oQ}) \\
\mkern-20muY\star X \mkern-20mu &=&\mkern-18mu \sum_{C_1,C_2} (-1)^{|x_{\oP}|\cdot |y_{\oQ}|} (\theta_1 y_{\oP}\#_2\,  \theta_2 x_{\oP})\otimes (\theta_1 y_{\oQ}\#_2\,  \theta_2 x_{\oQ}) =\\
\mkern-20mu &\stackrel[]{\mathrm{(CS2)}}{\mathrm{=}}&\mkern-18mu \sum_{C_1,C_2} (-1)^{|x_{\oP}| |y_{\oQ}|+|x_{\oP}||y_{\oP}|+|x_{\oQ}||y_{\oQ}|} (\theta_2 x_{\oP}\#_2\,  \theta_1 y_{\oP})\otimes (\theta_2 x_{\oQ}\#_2\,  \theta_1 y_{\oQ})=(-1)^{|X|\cdot |Y|} X\star Y .
\end{eqnarray*}

For associativity:
\begin{eqnarray*}
(X\star Y)\star Z= 
\sum_{C_3,C_4} (-1)^{|x_{\oQ}|\cdot |y_{\oP}|} \left( (\theta_3 x_{\oP}\#_2\,  \theta_4 y_{\oP})\otimes (\theta_3 x_{\oQ}\#_2\,  \theta_4 y_{\oQ})\right)\star Z= \\
=\sum_{\substack{C_1,C_2,\\C_3,C_4}} (-1)^{(|x_{\oQ}|+|y_{\oQ}|)\cdot |z_{\oP}|} (-1)^{|x_{\oQ}|\cdot|y_{\oP}|}(\theta_1 (\theta_3 x_{\oP} \#_2\,  \theta_4 y_{\oP})\#_2\,  \theta_2 z_{\oP})\otimes (\theta_1(\theta_3 x_{\oQ} \#_2\,  \theta_4 y_{\oQ})\#_2\,  \theta_2 z_{\oQ}),
\end{eqnarray*}
where $C_1\sqcup C_2= [n_x+n_y+n_z]$ and $C_3\sqcup C_4 = [n_x+n_y]$,  $\theta_1\colon [n_x+n_y] \xrightarrow{\cong} C_1$, $\theta_2\colon [n_z] \xrightarrow{\cong} C_2, \theta_3\colon [n_x]\xrightarrow{\cong} C_3, \theta_4\colon[n_y]\xrightarrow{\cong} C_4$ are chosen arbitrarily. From (CS1), 
we get $$\theta_1 (\theta_3 x_{\oP} \#_2\,  \theta_4 y_{\oP})= \theta_1(\theta_3 \sqcup \theta_4)(x_{\oP} \#_2\,  y_{\oP}),$$
where $(\theta_3 \sqcup \theta_4)\colon [n_x]\sqcup(n_x+[n_y])\xrightarrow{\cong} C_3\sqcup C_4=[n_x+n_y] $ and similarly for the $\oQ$-part.  
Therefore, we can rewrite the sums over decompositions $C_1\sqcup C_2$ and $C_3\sqcup C_4$ and actions of $\theta$'s as
\begin{gather*}
\sum_{E_1\sqcup E_2\sqcup E_3}  (-1)^{A} (\psi_1\sqcup \psi_2 \sqcup \psi_3) ((x_{\oP} \#_2\,  y_{\oP})\#_2\,  z_{\oP})\otimes (\psi_1\sqcup \psi_2 \sqcup \psi_3) ((x_{\oQ} \#_2\,  y_{\oQ})\# _2\, z_{\oQ}),
\end{gather*}
where $A= (|x_{\oQ}|+|y_{\oQ}|)\cdot |z_{\oP}|+|x_{\oQ}|\cdot|y_{\oP}|$, $\psi_1\colon [n_x]\xrightarrow{\cong} E_1$, $\psi_2\colon [n_y]\xrightarrow{\cong} E_2$, $\psi_3\colon [n_z] \xrightarrow{\cong} E_3$ and the sum is over all decompositions $E_1\sqcup E_2\sqcup E_3 = [n_x+n_y+n_z]$.  
Similarly, one gets
\begin{eqnarray*}
X\star (Y \star Z) =
\sum_{D_3,D_4}(-1)^{|y_{\oQ}|\cdot|z_{\oP}|} \left( X\star (\phi_3 y_{\oP}\#_2\,  \phi_4 z_{\oP})\otimes (\phi_3 y_{\oQ}\#_2\,  \phi_4 z_{\oQ})\right)
=\\
=\sum_{\substack{D_1,D_2,\\D_3,D_4}} (-1)^{|x_{\oQ}|\cdot ( |y_{\oP}|+|z_{\oP}|)} (-1)^{|y_{\oQ}|\cdot|z_{\oP}|} (\phi_1 x_{\oP}\#_2\,  \phi_2 (\phi_3 y_{\oP} \#_2\,  \phi_4 z_{\oP})\otimes (\phi_1 x_{\oQ}\#_2\,  \phi_2 (\phi_3 y_{\oQ} \#_2\,  \phi_4 z_{\oQ})),
\end{eqnarray*}
where $\phi_1\colon [n_x] \xrightarrow{\cong} D_1$, $\phi_2\colon [n_y+n_z]\xrightarrow{\cong} D_2$, $\phi_3\colon  [n_y]\xrightarrow{\cong}D_3$, $\phi_4\colon [n_z]\xrightarrow{\cong} D_4$.  Rewriting this as a sum over decompositions $E_1\sqcup E_2\sqcup E_3 = [n_x+n_y+n_z]$, this gives
\begin{gather*}
\sum_{E_1\sqcup E_2\sqcup E_3} \!\! (-1)^{A} (\psi_1\sqcup \psi_2 \sqcup \psi_3) (x_{\oP} \#_2\, ( y_{\oP}\#_2\, z_{\oP})) \otimes (\psi_1\sqcup \psi_2 \sqcup \psi_3) (x_{\oQ} \#_2\, (y_{\oQ}\#_2\, z_{\oQ}))\, .
\end{gather*}
By (CS3), we finally get $(X\star Y)\star Z= X\star (Y \star Z)$.
\\
\medskip

\textbf{The point (2.} follows from (CS5a). The left side of the required equality is:
\begin{gather*}
    \Delta(X\star Y) = \sum_{C_1, C_2}  \ooo{}{ab} \phi (\theta_1 x_{\oP} \#_2\, \theta_2 y_{\oP})\otimes \oOo{}{ab} \phi (\theta_1 x_{\oQ} \#_2\, \theta_2 y_{\oQ}) (-1)^{|x_{\oQ}||y_{\oP}|+|x_{\oP}|+|y_{\oP}|},
\end{gather*}
where $C_1\sqcup C_2 = [n_x+n_y]$ and where we have chosen $\phi=1_{[n_x+n_y]}$ (i.e. $a=n_x+n_y-1, b=n_x+n_y$). Now, we split the sum by distinguishing four cases according to positions of $a,b$ in the decomposition $C_1\sqcup C_2$ (cf. axiom (CS5a)). See also Figure \ref{fig:BDaxiom}. 
\begin{eqnarray*}
\Delta(X\star Y) &= &\sum_{\substack{C_1, C_2 \\ a,b\in C_1}}   (\ooo{}{ab}\theta_1 x_{\oP})\#_2\, \theta_2 y_{\oP} \otimes (\oOo{}{ab}\theta_1 x_{\oQ})\#_2\, \theta_2 y_{\oQ} \ (-1)^{|x_{\oQ}||y_{\oP}|+|x_{\oP}|+|y_{\oP}|}+ \\
 &+& \sum_{\substack{C_1, C_2 \\ a,b\in C_2}}   \theta_1 x_{\oP}\#_2\, (\ooo{}{ab}\theta_2 y_{\oP}) \otimes \theta_1 x_{\oQ}\#_2\, (\oOo{}{ab}\theta_2 y_{\oQ}) \ (-1)^{|x_{\oQ}||y_{\oP}|+|x_{\oP}|+|y_{\oP}|+|x_{\oQ}|} + \\
 &+& \sum_{\substack{C_1, C_2 \\ a\in C_1,b\in C_2}} \#_1\, \left(  \theta_1 x_{\oP}\ooo{a}{b} \theta_2 y_{\oP} \right) \otimes \#_1\, \left( \theta_1 x_{\oQ}\oOo{a}{b} \theta_2 y_{\oQ}\right) \ (-1)^{|x_{\oQ}||y_{\oP}|+|x_{\oP}|+|y_{\oP}|} + \\
 &+& \sum_{\substack{C_1, C_2 \\ a\in C_2,b\in C_1}}   \#_1\, \left(\theta_1 x_{\oP}\ooo{b}{a} \theta_2 y_{\oP} \right)\otimes\#_1\, \left( \theta_1 x_{\oQ}\oOo{b}{a} \theta_2 y_{\oQ}\right) \ (-1)^{|x_{\oQ}||y_{\oP}|+|x_{\oP}|+|y_{\oP}|} \, .
\end{eqnarray*}
It is easy to verify that the third and fourth lines give the same result. We compare the previous calculation with
\begin{gather*}
    (\Delta X)\star Y = \sum_{C_1,C_2} (\theta_1 \ooo{}{ab} \phi x_{\oP}) \#_2\, \theta_2 y_{\oP} \otimes (\theta_1 \oOo{}{ab} \phi x_{\oQ}) \#_2\, \theta_2 y_{\oQ}\ (-1)^{|x_{\oP}|+(1+|x_{\oQ}|)|y_{\oP}|},
\end{gather*}
where $C_1\sqcup C_2 = [n_x+n_y-2]$ with the choice $\phi=1_{[n_x]}$ and $a=n_x-1, b=n_x$,
\begin{gather*}
    (-1)^{|X|} X\star (\Delta Y)= \sum_{C_1, C_2} \theta_1 x_{\oP} \#_2\, \theta_2 (\ooo{}{ab} \phi y_{\oP}) \otimes \theta_1 x_{\oQ} \#_2\, \theta_2 (\oOo{}{ab} \phi y_{\oQ}) \ (-1)^{|x_{\oP}|+ |x_{\oQ}| + |y_{\oP}|+|x_{\oQ}||y_{\oP}|},
\end{gather*}
where we take $\phi=1_{[n_y]}$ and $a=n_y-1, b=n_y$, and 
\begin{gather*}
   (-1)^{|X|} \sharp \{X,Y\} = 2 \sum_{C_1,C_2}  \#_1\,\left(\theta_1 x_{\oP}\ooo{a}{b} \theta_2 y_{\oP}\right) \otimes \#_1\,\left(\theta_1 x_{\oQ}\oOo{a}{b} \theta_2 y_{\oQ}\right) \ (-1)^{|x_{\oQ}||y_{\oP}|+|x_{\oP}|+|y_{\oP}|}\, .
\end{gather*}
It is now easy to see that the required equality holds.
\medskip

\begin{figure}[h]
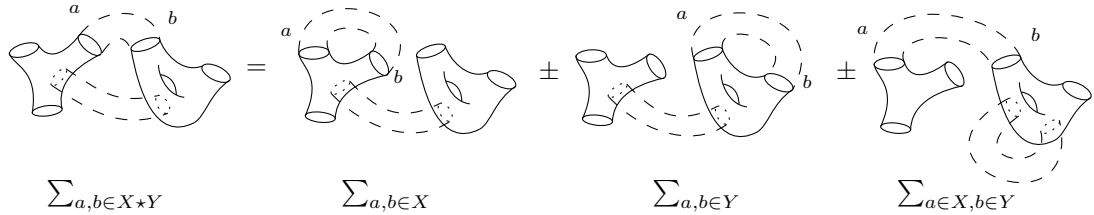

\begin{center}
\PICBDalg
\caption{Equation \eqref{kompatibilita_delty_nasobeni} pictorially. On the LHS, the operator $\Delta$ acts on a connected sum of two surfaces, connecting all pairs of punctures. On the RHS, we see three possible cases, depending on whether the punctures are both on the first surface, the second surface or there is one puncture on each surface. In the last case, the result has additional handle, giving the term $\sharp \{X,Y\}$ of \eqref{kompatibilita_delty_nasobeni}.}
\label{fig:BDaxiom}
\end{center}
\end{figure}

\textbf{The point 3.} follows from (CS1) and (CS6). First observe that:
\begin{gather*}
    \{X, Y\star Z\} = 2\sum_{\substack{C_1,C_2\\ D_1,D_2}}\left(\phi_1 x_{\oP} \ooo{a}{b} \phi_2(\theta_1 y_{\oP} \#_2\, \theta_2 z_{\oP})\right) \otimes \left(\phi_1 x_{\oQ} \oOo{a}{b} \phi_2(\theta_1 y_{\oQ} \#_2\, \theta_2 z_{\oQ})\right) \ (-1)^{B},
\end{gather*}
where we sum over all  decompositions $C_1\sqcup C_2 = [n_y+n_z], D_1\sqcup D_2 = [n_x+n_y+n_z-2]$ and $\theta_1\colon [n_y]\xrightarrow{\cong}C_1$, $\theta_2\colon [n_z]\xrightarrow{\cong}C_2$, $\phi_1\colon [n_x]\xrightarrow{\cong}D_1\sqcup\{a\}$, $\phi_2\colon [n_y+n_z]\xrightarrow{\cong}D_2\sqcup\{b\}$ are arbitrary bijections and $B=|y_{\oQ}|\cdot|z_{\oP}| + |x_{\oQ}|\cdot(|y_{\oP}|+|z_{\oP}|)+ |x_{\oP}| + |y_{\oP}|+|z_{\oP}|+|X| $. We split the sum into two according to position of $b$ ($b\in \phi_2(C_1)$ or $b\in \phi_2(C_2)$) and compare  with the two terms on the right hand side of \eqref{kompatibilita_zavorky_nasobeni}. The first term is 
\begin{gather*}
    \{X,Y\}\star Z =2 \sum \left(\theta_1(\phi_1 x_{\oP} \ooo{a}{b} \phi_2 y_{\oP}) \#_2\, \theta_2 z_{\oP} \right) \otimes \left(\theta_1(\phi_1 x_{\oQ} \oOo{a}{b} \phi_2 y_{\oQ}) \#_2\, \theta_2 z_{\oQ} \right)\ (-1)^{C},
\end{gather*}
where we sum over all decompositions $C_1\sqcup C_2 = [n_x+n_y+n_z-2]$, $D_1\sqcup D_2 = [n_x+n_y]$ and $\phi_1\colon [n_x]\xrightarrow{\cong} D_1$, $\phi_2\colon [n_y]\xrightarrow{\cong} D_2$, $\theta_1\colon  [n_x+n_y-2] \xrightarrow{\cong} C_1$, $\theta_2\colon [n_z]\xrightarrow{\cong}C_2$ are arbitrary bijections, $C=|x_{\oQ}|\cdot |y_{\oP}|+ |x_{\oP}| +|y_{\oP}|+ |z_{\oP}|\cdot(|x_{\oQ}|+|y_{\oQ}|+1)+|X|$ and  $a \in D_1$, $b\in D_2$ are arbitrary.  The second term is
\begin{gather*}
    Y\star \{X,Z\} = 2\sum \left(\theta_1 y_{\oP} \#_2\, \theta_2 (\phi_1 x_{\oP} \ooo{a}{b} \phi_2 z_{\oP})\right) \otimes  \left(\theta_1 y_{\oQ} \#_2\, \theta_2 (\phi_1 x_{\oQ} \oOo{a}{b} \phi_2 z_{\oQ})\right) (-1)^{D},
\end{gather*}
where we sum over all decompositions $C_1\sqcup C_2 = [n_x+n_y+n_z-2]$, $D_1\sqcup D_2 = [n_x+n_z]$ and $\phi_1\colon [n_x]\xrightarrow{\cong} D_1$, $\phi_2\colon [n_z]\xrightarrow{\cong} D_2$, $\theta_1\colon [n_y] \xrightarrow{\cong}C_1$, $\theta_2\colon [n_x+n_z-2]\xrightarrow{\cong}C_2$ are arbitrary bijections, $D= 
|x_{\oQ}|\cdot |z_{\oP}| + |x_{\oP}|+ |z_{\oP}|+|y_{\oQ}|\cdot(|x_{\oP}|+|z_{\oP}|)+|X|$  and $a\in D_1$, $b\in D_2$ are arbitrary.

Using (CS1) and (CS6) and collecting all the signs, one gets $$\{X,Y\star Z\}=\{X,Y\}\star Z + (-1)^{|X|\cdot |Y| +|Y|} Y\star \{X,Z\}\, .$$

\textbf{The point 4.} follows directly from the definition of $\sharp$ and $\star$. In \textbf{point 5.}, the compatibility of $\sharp$ with $\Delta$ follows from (CS4). The equation $ \{ \sharp X, Y \} = \sharp \{X, Y \}$ follows directly from (CS5b), the remaining equality follows from the symmetry of the bracket $\{-,-\}$. Similarly, the compatibility of $\sharp$ and $\star$ follows from (CS3) and the symmetry of $\star$.
\end{proof}
\end{theorem}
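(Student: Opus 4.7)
The plan is to verify each of the five points by direct computation, unpacking the definitions of $\star$, $\sharp$, $\Delta$ and $\{-,-\}$ from Section \ref{ssec:formalfunctions} and Definition \ref{product_on_con_space}, and then applying the appropriate connected-sum axioms (CS1)--(CS6). Throughout, I will write $X = x_\oP \otimes x_\oQ$, $Y = y_\oP\otimes y_\oQ$, $Z = z_\oP\otimes z_\oQ$, suppressing the implicit finite sums, and treat these as decomposable tensors; linearity then extends everything to the full $\Fun(\oP,\oQ)$.

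For point 1, commutativity of $\star$ reduces to (CS2) for both $\oP$ and $\oQ$, combined with the relabelling $C_1\leftrightarrow C_2$ of the decomposition; the Koszul signs emanating from the monoidal symmetry $\tau$ in \eqref{connected_sum_on_CON} combine to give the overall $(-1)^{|X||Y|}$. For associativity, I expand both $(X\star Y)\star Z$ and $X\star(Y\star Z)$ as sums over triple decompositions $E_1\sqcup E_2\sqcup E_3 = [n_x+n_y+n_z]$, use (CS1) to flatten the nested action of bijections through $\#_2$, and then identify the two expressions via the associativity axiom (CS3).

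Point 2 is the main work and the primary obstacle will be the sign bookkeeping. I expand $\Delta(X\star Y)$ with a bijection $\phi$ that places the contracted labels $a,b$ on the last two slots, and split the sum over decompositions $C_1\sqcup C_2$ into four sub-sums according to whether both $a,b \in C_1$, both $a,b\in C_2$, or one lies in each. Axiom (CS5a) rewrites each sub-sum: the first two become $(\Delta X)\star Y$ and $(-1)^{|X|}X\star(\Delta Y)$ after reorganising the implicit $\tau$, and the two mixed cases produce equal contributions involving $\#_1\ooo{a}{b}$ and $\#_1\oOo{a}{b}$ which together match $(-1)^{|X|}\sharp\{X,Y\}$; the factor of $2$ in the bracket definition \eqref{definice_zavorky} exactly absorbs the doubling from the two mixed cases.

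Point 3 follows the same strategy with (CS6) in place of (CS5a): I expand $\{X, Y\star Z\}$ and split the sum according to whether the outgoing label $b$ lies on the $Y$-factor or the $Z$-factor, using (CS1) to push bijections through $\#_2$ and (CS6) to identify the two pieces with $\{X,Y\}\star Z$ and $(-1)^{|X||Y|+|Y|}Y\star\{X,Z\}$ respectively. Point 4 is immediate since $\#_1$ and $\#_2$ are degree-zero chain maps (being maps of dg vector spaces), so $\sharp$ and $\star$ commute with $d = d_\oP\otimes 1 - 1\otimes d_\oQ$ without further work. Finally, for point 5, the relations $[\sharp,\Delta]=0$, $\{-,\sharp(-)\} = \sharp\{-,-\}$, and $(-)\star(\sharp -) = \sharp(-\star -)$ follow directly from (CS4), (CS5b), and (CS3) respectively, with the symmetric identities $\{\sharp(-),-\} = \sharp\{-,-\}$ and $(\sharp -)\star(-) = \sharp(-\star -)$ obtained from the graded symmetry of $\{-,-\}$ and the commutativity of $\star$ already proved in point 1.
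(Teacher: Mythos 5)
Your proposal follows essentially the same route as the paper's proof: the same expansion of $\star$, the same four-case split of $\Delta(X\star Y)$ via (CS5a) with the factor of $2$ in \eqref{definice_zavorky} absorbing the two mixed cases, the same two-case split of $\{X,Y\star Z\}$ via (CS1) and (CS6), and the same appeals to (CS4), (CS5b), (CS3) and symmetry for points 4 and 5. The only (inessential) quibble is in point 5: since (CS5b) places $\#_1$ on the \emph{first} factor of $\ooo{a}{b}$, it directly yields $\{\sharp X, Y\}=\sharp\{X,Y\}$, with $\{X,\sharp Y\}$ then following from the symmetry of the bracket, rather than the other way around as you state.
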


\subsection{Beilinson-Drinfeld algebras}
Using the action of $\sharp$, we can turn $\Fun(\oP, \oQ)$ to a (non-unital) BD algebra.
\begin{definition}
    For $f = \sum_{i\ge 0} f_i \varkappa^i \in \fld [[\varkappa]]$ and $p = \sum_{n, G \ge 0} p_{n, G} \in \Fun(\oP, \oQ)$,
    define the action of $f$ on $p$ by
    \[ fp = \sum_{n, G, i \ge 0} f_i \, \sharp^i (p_{n, G}). \]

\end{definition}
    Note that only terms coming from $p_{n, G'}$ for $G'\leq G$ contribute to the component $\Fun(\oP,\oQ)(n,G)=\left(\oP(n,G) \otimes \oQ(n,G)\right)^{\Sigma_n}$, and thus the result is well-defined.
\begin{lemma}\label{lemma_flat}
    The space $\Fun(\oP, \oQ)$ equipped with the action of $\fld[[\varkappa]]$ becomes a graded module over $\fld[[\varkappa]]$ and the operations $d, \Delta, \{-,-\}$ and $\star$ are maps of graded modules. 

    This module is flat over $\fld[[\varkappa]]$ if and only if the maps $\sharp \colon (\oP(n, G)\otimes \oQ(n, G))^{\Sigma_n} \to (\oP(n, G+2)\otimes \oQ(n, G+2))^{\Sigma_n}$ are injective for all $n, G$.
\end{lemma}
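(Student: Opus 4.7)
My plan is to break the proof into three steps: well-definedness of the module structure, the fact that the listed operations are module maps, and the flatness criterion.

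First I would verify that the action $fp = \sum_{n,G,i\ge 0} f_i\, \sharp^i(p_{n,G})$ is well-defined and gives a graded $\fld[[\varkappa]]$-module structure. For a fixed target component $(n,G)$, only summands with $2i\le G$ contribute (since $\sharp$ shifts $G$ by $+2$ and preserves $n$), so the sum is finite in each component and $fp$ lives in $\Fun(\oP,\oQ)$. The module axioms $(f+g)p=fp+gp$, $(fg)p=f(gp)$, $1\cdot p=p$ follow from $\sharp$ being linear and from $\sharp^i\sharp^j=\sharp^{i+j}$. For the grading, the operator $\sharp=\#_1\otimes\#_1$ has degree $0$, so we take $\varkappa$ to have degree $0$, making the action grading-preserving.

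Next, to see that $d,\Delta,\{-,-\}$ and $\star$ are maps of graded $\fld[[\varkappa]]$-modules, it is enough to check they commute with multiplication by $\varkappa$, i.e. with $\sharp$. The compatibility of $\sharp$ with $d$ is Point 4 of Theorem \ref{thm_BDproperties} ($\sharp$ is a chain map); the compatibilities with $\Delta$, $\{-,-\}$ and $\star$ are precisely the content of Point 5, including $\star(\sharp X\otimes Y)=\star(X\otimes \sharp Y)=\sharp\star(X\otimes Y)$ and the analogous identities for the bracket. Hence these operations descend to the appropriate $\fld[[\varkappa]]$-(bi)linear maps.

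For the flatness statement, the key observation is that $\fld[[\varkappa]]$ is a discrete valuation ring, hence a principal ideal domain, so a module over it is flat if and only if it is torsion-free. Every nonzero $f\in\fld[[\varkappa]]$ has the form $f=\varkappa^k u$ with $u$ a unit, so torsion-freeness of $\Fun(\oP,\oQ)$ reduces to the statement that multiplication by $\varkappa$, i.e.\ the operator $\sharp$, is injective on $\Fun(\oP,\oQ)$. Since $\sharp$ acts componentwise as the map $\sharp\colon(\oP(n,G)\otimes\oQ(n,G))^{\Sigma_n}\to(\oP(n,G+2)\otimes\oQ(n,G+2))^{\Sigma_n}$, and a nonzero element of the product has some nonzero component, $\sharp$ is injective on the whole space iff it is injective on each component. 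The only step that might merit a brief comment is the passage ``flat $\Leftrightarrow$ torsion-free'' over $\fld[[\varkappa]]$, which one can either quote from commutative algebra or verify directly using $\mathrm{Tor}_1^{\fld[[\varkappa]]}(\fld,M)=\Ker(\varkappa\cdot\colon M\to M)$ together with the fact that tensoring with $M$ is exact iff this Tor vanishes on all cyclic quotients, which here reduces to the single quotient $\fld[[\varkappa]]/(\varkappa)=\fld$.
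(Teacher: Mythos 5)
Your proof is correct and follows essentially the same route as the paper: the first part is deduced from Theorem \ref{thm_BDproperties} (points 4 and 5), and flatness is reduced, via the PID/torsion-free criterion for $\fld[[\varkappa]]$, to injectivity of multiplication by $\varkappa$, i.e.\ of $\sharp$, which is then checked componentwise exactly as in the paper. The extra details you supply (finiteness of the action in each component, the Tor argument for flat $\Leftrightarrow$ torsion-free) are consistent with what the paper either notes after the definition or cites from the literature.
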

Thus, if $\sharp$ is injective,  $\Fun(\oP, \oQ)$ becomes a non-unital BD algebra. Note that in all of our examples ($\mathcal{QC}$, $\mathcal{QO}$ and $\mathcal E_V$), all $\#_1$ are injective, which is a stronger condition than the injectivity of $\sharp$.

\begin{proof}
    The first part of the lemma follows from Theorem \ref{thm_BDproperties}.

    Since $\fld[[\varkappa]]$ is a PID, being flat is equivalent to being torsion-free, i.e. no non-zero element of $\Fun(\oP, \oQ)$ is annihilated by a non-zero element of $\fld[[\varkappa]]$ \cite[Corollary~6.3]{Eisenbud}. This is furthermore equivalent to $\mathrm{Ker}\varkappa = 0$,
    since any non-zero element of $\fld[[\varkappa]]$ is equal to $\varkappa^i$ up to an invertible element, and if $\varkappa^i X =0$ for minimal $i$, then $\varkappa^{i-1}X \neq 0$ is annihilated by $\varkappa$. Let us now show the two implications. 
    
    If $\sharp (X) = 0$ for some nonzero invariant $X \in \oP(n, G)\otimes \oQ(n, G)$, then $\varkappa\in \fld[[\varkappa]]]$ annihilates $X$. On the other hand, let us suppose that  $\varkappa$ annihilates an element $\sum x_{n, G}$. Then each of the summands, an element of $\Fun(\oP, \oQ)(n, G)$, is in the kernel of $\sharp$.    
\end{proof}

\subsection{Quantum master equation}
To be able to talk about the exponentials $e^{S/\varkappa}$, we need to introduce negative powers of $\varkappa$. To avoid various convergence issues, we will restrict the possible negative powers of $\varkappa$. See Remark \ref{remark:weight} at the end of this section explaining the motivation for the following definition.
\begin{definition}\label{def:weight}
Consider the space of fixed \emph{weight} $w \in \frac 12\mathbb Z$.
\[ \tilde{\mathrm F}_w \equiv \bigoplus_{n/2 + G + 2q +1 = w}\Fun(\oP, \oQ)(n, G)\otimes \fld \varkappa^q \]
where by $\fld \varkappa^q, q\in \mathbb Z$, we mean a 1-dimensional vector space, with a generator $\varkappa^q$. 

Similarly, let
\[ \tilde{\mathrm F}_{\ge w}  = \prod_{\tilde w \ge w}   \tilde{\mathrm{F}}_{\tilde w} \] 
\end{definition}
With the multiplication given by $\star$ and by $\varkappa^{q_1} \star \varkappa^{q_2} = \varkappa^{q_1+q_2}$, this space becomes a graded-commutative algebra, with operations $d$, $\Delta$ and
$\{-,-\}$ extended by $\varkappa$-linearity (the bracket is possibly defined only partially, since it decreases the weight by $2$). The action $\varkappa \colon \varkappa^q\mapsto \varkappa^{q+1}$ makes $\tilde{\mathrm F}_{\ge w}$ into a $\fld[[\varkappa]]$-module.
\begin{definition}\label{def:funexp}
Define the space $\Fun_\mathrm{Exp}(\oP, \oQ)$ by the following quotient
\[ \Fun_\mathrm{Exp}(\oP, \oQ) \equiv \tilde{\mathrm F}_{\ge \frac 12} \,  / \, \{ \sharp X - \varkappa X \mid X \in \tilde{\mathrm F}_{\ge -\frac 32} \}. \]
\end{definition}
\begin{lemma}\label{lemma:weight}\begin{enumerate}[label=(\arabic*)]
    \item The space $\Fun_\mathrm{Exp}(\oP, \oQ)$ inherits the algebra structure, action of $\fld[[\varkappa]]$ and the operations $\star$, $d$, $\Delta$ and $\{-,-\}$, with the bracket defined only for arguments of total weight $\ge 5/2$. In the inherited weight grading, the maps $\star$, $d$, $\Delta$ have weight $0$, the bracket has weight $-2$ and $\varkappa$ has weight $2$. As a $\fld[[\varkappa]]$-module, it is flat.
    
    \item The natural map $\iota \colon \Fun(\oP, \oQ) \to \Fun_\mathrm{Exp}(\oP, \oQ)$,  with the image in weight $> 2$ of $\Fun_\mathrm{Exp}(\oP, \oQ)$, is a map of BD-algebras. It is injective iff the condition from Lemma \ref{lemma_flat} is satisfied, i.e. if  the maps $\sharp \colon (\oP(n, G)\otimes \oQ(n, G))^{\Sigma_n} \to (\oP(n, G+2)\otimes \oQ(n, G+2))^{\Sigma_n}$ are injective for all $n, G$.
\end{enumerate}
\end{lemma}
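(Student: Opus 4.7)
\emph{Proof plan.} For part (1), the weight claims follow by direct inspection: each operation changes $(n, G)$ in fixed ways and leaves $q$ untouched, so substituting into $w = n/2 + G + 2q + 1$ one checks that $\star, d, \Delta$ have weight $0$, the bracket has weight $-2$, and both $\sharp$ and $\varkappa$ have weight $+2$. The subspace $I = \{\sharp X - \varkappa X \mid X \in \tilde{\mathrm F}_{\ge -3/2}\}$ is therefore weight-graded inside $\tilde{\mathrm F}_{\ge 1/2}$. By points 4 and 5 of Theorem \ref{thm_BDproperties}, $\sharp$ commutes (with appropriate signs) with $\star, d, \Delta$ and with each slot of $\{-,-\}$, and $\varkappa$ does so automatically by $\varkappa$-linear extension. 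Hence every operation sends a generator $\sharp X - \varkappa X$ to another $\sharp X' - \varkappa X'$, so $I$ is preserved and all operations descend to $\Fun_\mathrm{Exp}(\oP, \oQ)$. The BD-algebra axioms, being equations in $\tilde{\mathrm F}_{\ge 1/2}$, are inherited.

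For flatness, I would show that $\varkappa \bar X = 0$ implies $\bar X = 0$. From $\varkappa X \in I$, one obtains $\varkappa (X + Z) = \sharp Z$ for some $Z \in \tilde{\mathrm F}_{\ge -3/2}$. Comparing $(n, G, q)$-components gives the recursion $Z^{(r)}_{n, G} = \sharp Z^{(r+1)}_{n, G-2} - X^{(r)}_{n, G}$, which by iteration and stability of $\oP, \oQ$ has the closed form $Z^{(r)}_{n, G} = -\sum_{j \ge 0} \sharp^j X^{(r+j)}_{n, G-2j}$, a finite sum at each $(n, G, r)$. Since all $X$-terms contributing to $Z^{(r)}_{n, G}$ sit at the same weight, the direct-sum structure of each $\tilde{\mathrm F}_w$ forces, for every nonzero component $X^{(r)}_{n, G}$, the vanishing $\sharp^j X^{(r)}_{n, G} = 0$ for $j$ large. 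Any such $\sharp$-nilpotent component $X^{(r_0)}_{n_0, G_0}$ killed by $\sharp^k$ lies in $I$ via the telescoping element $Y = -\sum_{j=1}^{k} \sharp^{j-1} X^{(r_0)}_{n_0, G_0} \otimes \varkappa^{r_0 - j}$, whose components sit at weight $w - 2 \ge -3/2$ and satisfy $\sharp Y - \varkappa Y = X^{(r_0)}_{n_0, G_0}$ (the intermediate powers of $\sharp$ telescope). Summing these over all components of $X$ gives $X \in I$, so $\bar X = 0$.

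For part (2), the map $\iota$ sends $X \in \Fun(\oP, \oQ)(n, G)$ to its class at $q = 0$; its image sits in weight $n/2 + G + 1 \ge 5/2 > 2$ by stability. Compatibility with $\star, d, \Delta, \{-,-\}$ is immediate from the defining formulas, while $\varkappa \iota(X) = [\varkappa X] = [\sharp X] = \iota(\sharp X)$ is precisely the quotient relation, making $\iota$ a BD-algebra morphism. The injectivity dichotomy then follows from the flatness analysis specialized to $X$ at $q = 0$: if $\sharp$ is injective on invariants, the forced $\sharp$-nilpotency of $X$ yields $X = 0$; conversely, if $\sharp X = 0$ for some nonzero $X$, then $Y = -X \otimes \varkappa^{-1}$ (of weight $n/2 + G - 1 \ge 1/2$, hence in $\tilde{\mathrm F}_{\ge -3/2}$) satisfies $\sharp Y - \varkappa Y = X$, so $\iota(X) = 0$ while $X \ne 0$.

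The main obstacle is the weight-and-support bookkeeping in the flatness argument: one must verify simultaneously that the closed-form expression for $Z$ terminates at each weight and that the telescoping representative $Y$ genuinely lies in $\tilde{\mathrm F}_{\ge -3/2}$. Both facts rely crucially on the stability condition \eqref{eq:stability}, matching the role highlighted in the introduction.
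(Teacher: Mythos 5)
There is a genuine gap in your flatness argument, at the step where you pass from the finite-support condition on $Z$ to componentwise $\sharp$-nilpotency of $X$. What the direct-sum structure of $\tilde{\mathrm F}_w$ actually gives you is that the \emph{sums} $Z^{(r)}_{n,G}=-\sum_{j\ge 0}\sharp^j X^{(r+j)}_{n,G-2j}$ vanish for $r$ sufficiently negative; it does not give $\sharp^j X^{(r)}_{n,G}=0$ for each individual component, because all components of $X$ with the same arity $n$ and the same weight land, after applying the appropriate powers of $\sharp$, in the \emph{same} space $\Fun(\oP,\oQ)(n,G)$ and can cancel against each other. Concretely, take any $A\in\Fun(\oP,\oQ)(n,H)^{\Sigma_n}$ that is not $\sharp$-nilpotent and set $X=A\varkappa^{s}-(\sharp A)\varkappa^{s-1}$: then $Z=-A\varkappa^{s}$ has finite support and satisfies $\varkappa X=\sharp Z-\varkappa Z$, yet no component of $X$ is killed by any power of $\sharp$. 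Your telescoping element $Y$ is only defined for $\sharp$-nilpotent components, so the argument breaks down exactly where it is needed; and since your part (2) injectivity claim is reduced to "the forced $\sharp$-nilpotency of $X$", the gap propagates there as well.

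The conclusion is nevertheless correct, and the repair is much shorter than your recursion (it is the route the paper takes). Since $\sharp$ and $\varkappa$ both raise weight by exactly $2$, the ideal is weight-graded; as $\varkappa X$ has weight $\ge 5/2$, you may discard the weight $<1/2$ components of $Z$ and assume $Z\in\tilde{\mathrm F}_{\ge 1/2}$. Then divide the relation $\varkappa X=\sharp Z-\varkappa Z$ by $\varkappa$ (multiplication by $\varkappa$ is a bijection on each $\tilde{\mathrm F}_w$, shifting $q$ by one): this exhibits $X=\sharp(Z/\varkappa)-\varkappa(Z/\varkappa)$ with $Z/\varkappa\in\tilde{\mathrm F}_{\ge -3/2}$, i.e.\ $X$ lies in the ideal, so $\bar X=0$. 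For part (2), the paper argues by fixing the weight, expanding $X$ as a finite Laurent polynomial in $\varkappa$ and comparing coefficients of the identity $Y=\sharp X-\varkappa X$: here consecutive coefficients are linked directly (no mixing), so injectivity of $\sharp$ kills the negative-index coefficients from the bottom up and the top coefficients vanish from the top down, giving $Y=0$; this is the corrected version of what you gesture at. Your converse direction (using $Y=-X\varkappa^{-1}$ for $\sharp X=0$), the inheritance of the operations via points 4 and 5 of Theorem \ref{thm_BDproperties}, the weight bookkeeping, and the verification that $\iota$ is a BD-morphism all match the paper and are fine.
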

\begin{proof}
\begin{enumerate}[label=(\arabic*)]
    \item    By Theorem \ref{thm_BDproperties}, the subspace $J \equiv \{ \sharp X - \varkappa X \mid X \in \tilde{\mathrm F}_{\ge -\frac 32} \}$ is an ideal preserved by the BD-algebra maps. The weight grading is preserved since both $\sharp$   and multiplication by $\varkappa$ increase weight by $2$.
    \medskip
    
    To show the flatness w.r.t. the action of $k[[\varkappa]]$, it is enough to show that multiplication of $\varkappa$ is injective. Let $X\in \Fun_\mathrm{Exp}(\oP, \oQ)$ be such that $\varkappa X = 0$, i.e.
    $\varkappa (X + J) \in J$, i.e. $\varkappa X = \varkappa Y - \sharp Y$ for some $Y\in \tilde{\mathrm F}_{w\ge 12}$. Then $X = \varkappa (Y/\varkappa)- \sharp (Y/\varkappa) \in J$.  
    
   \item The map $\iota$ is well defined, since only elements with $n/2 + G = w-1$ contribute to the weight $w$
   component of $\Fun_\mathrm{Exp}(\oP, \oQ)$. The image of $\iota$ has weight $>2$ by the stability condition \eqref{eq:stability}.
   \medskip
   
   To show the injectivity of $\iota$, consider an element $Y\in \Fun(\oP, \oQ)$ which gets sent to the ideal $J$, i.e. $Y = \sharp X - \varkappa X$ for some $X\in \tilde{\mathrm{F}}_{\ge -\frac 32}$. Since the ideal $J$ is compatible with the weight grading, we can assume that $Y$ and $X$ have a definite weight.  We expand $X$ in powers of $\varkappa$
    \[ X = \sum_{-m}^n X_q \varkappa^{q} \]
    where the sum is finite thanks to the direct sum in definition of $\tilde{\mathrm{F}}_w$. Since $Y= \sharp X - \varkappa X$,  we get an equality of Laurent polynomials in $\varkappa$.
    \[ \sharp X_{-m} \varkappa^{-m} + (X_{-m} + \sharp X_{-m+1})\varkappa^{-m +1} + \dots + (X_{n-1} +\sharp X_n)\varkappa^n + X_n\varkappa^{n+1} = Y \varkappa^0 \]
    because $Y\in \Fun(\oP, \oQ)$ has no powers of $\varkappa$ in itself. If $\sharp$ is injective, then we obtain from this equality that $X_0 = X_{-1} = 0$, which implies $Y = 0$. On the other hand, if $\sharp$
    is not injective, an element $K$ of its kernel satisfies $K = \sharp(K/\varkappa) - \varkappa (K/\varkappa)$, which lies in the ideal from Definition \ref{def:funexp},
\end{enumerate}
\end{proof}
This allows us to define formal exponentials and logarithms. As an image of the exponential, we will consider $\Fun^\mathrm{Grp}_\mathrm{Exp}(\oP, \oQ)$, a multiplicative abelian group of elements $1 + X$ of with $X\in \Fun_\mathrm{Exp}(\oP, \oQ)$, on which the BD-algebra operations can be defined in an obvious way.  
\begin{definition}
    Define two maps
    \[  \exp(X)\colon \Fun_\mathrm{Exp}(\oP, \oQ) \leftrightarrows \Fun^\mathrm{Grp}_\mathrm{Exp}(\oP, \oQ) \colon \log{X} \]
    by 
    \[ \exp(X) = 1 + X + X^2/2! + X^3/3! + \dots\]
    and 
    \[  \log{1+X} = X - X^2/2 + X^3/3 + \dots \]
\end{definition}
\begin{lemma}
    These two maps are well-defined, mutually inverse maps. The exponential behaves with respect to the $\Delta$ as
    \[ \Delta (e^X) =  (\Delta X + \frac 12 \varkappa \{X, X\}) e^X.\]
\end{lemma}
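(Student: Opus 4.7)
The plan is to first establish well-definedness of $\exp$ and $\log$ and their mutual inverse property using the weight grading, and then derive the $\Delta$-identity by induction from the BD-algebra compatibility (\ref{eq:BVgood}).

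\emph{Convergence and mutual inverses.} By Lemma \ref{lemma:weight}, the product $\star$ has weight $0$ on $\Fun_\mathrm{Exp}(\oP, \oQ)$. Thus, if $X = \sum_{w\ge 1/2} X_w$ is the weight decomposition of $X \in \Fun_\mathrm{Exp}(\oP, \oQ)$, every monomial contributing to $X^k$ has weight at least $k/2$. Hence only finitely many $k$ contribute to the component of any fixed output weight, so both series $\sum_k X^k/k!$ and $\sum_k (-1)^{k+1}X^k/k$ converge weight-wise in the direct product $\prod_w \tilde{\mathrm F}_w/J$. The identities $\log(\exp(X)) = X$ and $\exp(\log(1+X)) = 1+X$ hold as formal power series identities in a single variable, and this formal identity survives substitution into any graded-commutative algebra in which the two series converge.

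\emph{The $\Delta$-identity.} After extending \eqref{eq:BVgood} by $\varkappa$-linearity and restricting to $X$ of even (say zero) degree, so that $e^X$ has the usual meaning in a graded-commutative algebra, the identity reads
\[ \Delta(YZ) = (\Delta Y)Z + Y\,\Delta Z + \varkappa\{Y, Z\}. \]
Combined with the graded Leibniz rule for the bracket and the commutativity of $X$ with itself — giving $\{X, X^{n-1}\} = (n-1) X^{n-2}\{X, X\}$ — a straightforward induction on $n$ yields
\[ \Delta(X^n) = n\, X^{n-1}\Delta X + \binom{n}{2}\varkappa\{X,X\}\, X^{n-2}. \]
Dividing by $n!$ and summing over $n\ge 0$ gives
\[ \Delta(e^X) = \Delta X\cdot e^X + \tfrac12 \varkappa\{X,X\}\cdot e^X = \bigl(\Delta X + \tfrac12\varkappa\{X,X\}\bigr) e^X, \]
as claimed.

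The main obstacle is essentially administrative: one must verify convergence carefully in the direct-product topology and keep signs under control if one wishes to allow odd-degree arguments (in which case $X\star X$ already sits in different weights than $X$ itself, and one must be careful to interpret $e^X$ as a sum only over those $n$ for which $X^n$ is non-vanishing). Conceptually, both steps reduce to repeated applications of Lemma \ref{lemma:weight} and Theorem \ref{thm_BDproperties}.
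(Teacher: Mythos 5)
Your proposal is correct and follows essentially the same route as the paper: the paper likewise derives $\Delta (X^n) = n X^{n-1}\Delta X + \tfrac{n(n-1)}{2}\,\sharp\{X,X\}\,X^{n-2}$ from the compatibility identities \eqref{kompatibilita_delty_nasobeni} and \eqref{kompatibilita_zavorky_nasobeni} (which become the BD relation \eqref{eq:BVgood} in the quotient where $\sharp=\varkappa$) and then sums the power series. Your explicit weight-grading argument for convergence and mutual inverseness only spells out what the paper leaves implicit, so there is no substantive difference.
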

\begin{proof}
It is a simple consequence of equations \eqref{kompatibilita_delty_nasobeni} and \eqref{kompatibilita_zavorky_nasobeni} that 
$$\Delta X^n = n X^{n-1} \Delta X + \dfrac{ n(n-1)}{2} \sharp \{X,X\} X^{n-2}\, .$$
Thus, for a power series $f(X)= \sum_{n\geq 0} f_n X^n$, we have in the quotient of Definition \ref{def:funexp}
$$\Delta(f(X))=\sum_{n\geq 0} f_n \left(n X^{n-1} \Delta X + \dfrac{n(n-1)}{2} \varkappa \{X,X\} X^{n-2}\right) = f'(X)\Delta X+ \dfrac{1}{2}  f''(X) \varkappa \{X,X\}\, .$$
\end{proof}

Thus we arrive at another characterization of morphisms from the Feynman transform of $\oP$.
\begin{corollary}\label{cor:QMEexponential}
    Assume that the condition on $\sharp$ from Lemma \ref{lemma_flat} is satisfied. Then, a degree 0 element $S \in \Fun(\oP, \oQ)$ satisfies the quantum master equation
    \[ (d + \Delta ) S + \frac 12 \{S, S\} = 0 \]
    if and only if 
    \[ (d + \Delta) e^{\iota(S)/\varkappa} = 0\]
    holds in $\Fun_\mathrm{Exp}(\oP, \oQ)$.
\end{corollary}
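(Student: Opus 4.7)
The plan is to reduce everything to the BD-algebra identity $\Delta(f(X)) = f'(X)\Delta X + \tfrac{1}{2}f''(X)\varkappa\{X,X\}$ from the preceding lemma, applied to $f=\exp$ and $X=\iota(S)/\varkappa$. The key observation is that the whole quantum master expression factors out of $(d+\Delta)e^{\iota(S)/\varkappa}$ on the nose.

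First I would note that $d$ is a $\varkappa$-linear derivation of $\star$, and that $\iota$ is a map of BD-algebras, giving
\[ d\!\left(e^{\iota(S)/\varkappa}\right) = \tfrac{1}{\varkappa}\,\iota(dS)\,\star\, e^{\iota(S)/\varkappa}. \]
Since $\Delta$ and $\{-,-\}$ are $\varkappa$-linear and preserved by $\iota$, the previous lemma applied to $f=\exp$ specializes to
\[ \Delta\!\left(e^{\iota(S)/\varkappa}\right) = \left(\Delta\!\left(\tfrac{\iota(S)}{\varkappa}\right) + \tfrac{1}{2}\varkappa\{\tfrac{\iota(S)}{\varkappa},\tfrac{\iota(S)}{\varkappa}\}\right)\star e^{\iota(S)/\varkappa} = \tfrac{1}{\varkappa}\,\iota\!\left(\Delta S + \tfrac{1}{2}\{S,S\}\right)\star e^{\iota(S)/\varkappa}. \]
Adding the two yields the key identity
\[ (d+\Delta)\,e^{\iota(S)/\varkappa} \;=\; \tfrac{1}{\varkappa}\,\iota\!\bigl((d+\Delta)S+\tfrac{1}{2}\{S,S\}\bigr)\,\star\, e^{\iota(S)/\varkappa} \]
in $\Fun_\mathrm{Exp}(\oP,\oQ)$, from which both directions of the corollary read off mechanically.

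The forward direction is immediate: if $S$ satisfies the QME then the factor $\iota(\cdots)$ on the right vanishes. For the converse, I would use that $e^{\iota(S)/\varkappa}=1+Y$ with $Y$ of strictly positive weight, hence invertible in the multiplicative group $\Fun^\mathrm{Grp}_\mathrm{Exp}(\oP,\oQ)$ with inverse $e^{-\iota(S)/\varkappa}$. Multiplying by this inverse reduces the vanishing of $(d+\Delta)e^{\iota(S)/\varkappa}$ to $\tfrac{1}{\varkappa}\iota\!\bigl((d+\Delta)S+\tfrac{1}{2}\{S,S\}\bigr)=0$ in $\Fun_\mathrm{Exp}(\oP,\oQ)$. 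Flatness of $\Fun_\mathrm{Exp}(\oP,\oQ)$ over $\fld[[\varkappa]]$ (Lemma \ref{lemma:weight}(1)) makes multiplication by $\varkappa$ injective, so $\iota\!\bigl((d+\Delta)S+\tfrac{1}{2}\{S,S\}\bigr)=0$; then the hypothesis on $\sharp$ gives injectivity of $\iota$ (Lemma \ref{lemma:weight}(2)), and we conclude $(d+\Delta)S+\tfrac{1}{2}\{S,S\}=0$ in $\Fun(\oP,\oQ)$.

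The only genuinely subtle point — and the one I would verify most carefully — is the weight bookkeeping that makes all of the above symbolic manipulations legal in $\Fun_\mathrm{Exp}(\oP,\oQ)$. Specifically, the stability condition $\tfrac{n}{2}+G>1$ forces $\iota(S)/\varkappa$ to have strictly positive weight, so $\iota(S)^{\star k}/\varkappa^k$ sits in a weight component that grows with $k$ and only finitely many terms of $e^{\iota(S)/\varkappa}$ contribute to any fixed weight; similarly $\iota\!\bigl((d+\Delta)S+\tfrac{1}{2}\{S,S\}\bigr)/\varkappa$ has weight $\ge\tfrac{1}{2}$, so it legitimately represents an element of $\Fun_\mathrm{Exp}(\oP,\oQ)$. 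Once this is secured, the corollary is a one-line consequence of Theorem \ref{thm_BDproperties} and Lemma \ref{lemma:weight}.
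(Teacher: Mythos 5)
Your proposal is correct and is essentially the paper's own argument: apply the lemma $\Delta(e^X)=(\Delta X+\tfrac12\varkappa\{X,X\})e^X$ (plus $d$ being a derivation) to $X=\iota(S)/\varkappa$, obtain $(d+\Delta)e^{\iota(S)/\varkappa}=\tfrac{1}{\varkappa}\,\iota\bigl(dS+\Delta S+\tfrac12\{S,S\}\bigr)\star e^{\iota(S)/\varkappa}$, and conclude both directions from invertibility of the exponential, the weight/stability bookkeeping, and injectivity of $\iota$ under the hypothesis on $\sharp$. The only cosmetic difference is that your appeal to flatness is superfluous (multiplying $\tfrac1\varkappa\iota(\cdots)=0$ by $\varkappa$ already gives $\iota(\cdots)=0$), but this does not affect correctness.
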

\begin{proof}
Thanks to the stability condition \eqref{eq:stability}, $\iota(S)/\varkappa$ has positive weight, and we have 
\[ 0 = (d+\Delta) e^{\iota(S)/\varkappa} =  \frac{ \iota(d S + \Delta S + \frac 12 \{S, S \}) }{\varkappa} e^{\iota(S)/\varkappa}\]
which is equivalent to the quantum master equation for injective $\iota$.
\end{proof}
\begin{remark}\label{remark:weight}
The weight $w = n/2 + G + 2q + 1$ is a generalization of the weight $2(g+q)+n$ introduced by Braun and Maunder \cite[Def.~2.8]{BraunMaunder}; this choice is motivated by $\Delta, \star, \sharp/\varkappa$ having weight 0. This weight, and the stability condition \eqref{eq:stability}, make it possible to define the expression $\Delta e^{\iota(S)/\varkappa}$. See also \cite[Sec.~2.2]{DJP} for similar considerations for the $\mathcal{QC}$ case. 
\medskip

The power of $\varkappa$ should be thought of as the geometric genus $g$, motivated by the relation $\varkappa = \sharp$ in Definition \ref{def:funexp}. Zwiebach uses powers of $\hbar$ to count $G$ in the open-closed string theory context \cite[Eq.~(3.1),~(3.11)]{ZwiebachOC}, which is why we used the letter $\varkappa$ instead. \end{remark}

\subsection{Examples}
We will now describe the BD algebra structure coming from the two modular operads $\mathcal{QC}$ and $\mathcal{QO}$. Apart from the connected sum and the induced commutative product, these algebras were described in \cite{DJM}. Using the commutative product, we obtain a slightly simplified description, since $d$, $\Delta$ and the bracket are can be specified on generators of the algebra. 
\smallskip

Let us fix an odd symplectic vector space $V$ with a symplectic
form $\omega$ and a differential $d$. Let $e_i$ be a basis of $V$, which determines the dual basis $\phi^i$ of $V^*$ and the matrix $\omega_{ij} = \omega(e_i, e_j)$ with inverse $\omega^{ij}$. 

\subsubsection[The operad QC]{The operad $\mathcal{QC}$}
The space of formal functions on $V$, recalled in the following definition, is a BD algebra. We will now show that (up to a few non-stable elements), this BD algebra is isomorphic to $\Fun(\mathcal{QC}, \mathcal E_V)$.
\begin{definition}\label{def:SymBV}
On $\Fun_\mathrm{sym}(V) =  \prod_{n\ge 0}\mathrm{Sym}^n (V^*) \otimes \fld[[\varkappa]]$, define $d$ and $\Delta$ 
\begin{align} 
    d  & = (-1)^\hdeg{\phi^i}(\phi^i \circ d_V) \frac{\partial}{\partial \phi^i},  \nonumber\\
    \Delta &= (-1)^{\hdeg{\phi^i}} \omega^{ij} \frac {\partial^2}{\partial \phi^i \partial \phi^j}\,, \label{eq:Deltasym}.
\end{align}
\end{definition}
The space $(\Fun_\mathrm{sym}(V), d, \Delta)$ is a BV algebra, and thus  $(\Fun_\mathrm{sym}(V), d, \varkappa \Delta, \{-,-\})$ is a BD algebra, where $(-1)^\hdeg{X}\{X, Y\} = \Delta(XY) - \Delta(X) Y - (-1)^{\hdeg{X}} X\Delta(Y)$. For completeness, this gives
\[ \{X, Y\} = (-1)^{\hdeg{\phi^i} + \hdeg{X}(\hdeg{\phi^j}+1)}  \omega^{ij} \frac{\partial X}{\partial \phi^i} \frac{\partial Y}{\partial \phi^j}.\]
\smallskip

Recall from Section \ref{ssec:formalfunctions} that the space $\Fun(\mathcal{QC}, \mathcal E_V)$ is spanned by $\Sigma_n$-invariant tensors of the form $C_n^g \otimes w$, where $C_n^g$ is the generator of $\mathcal{QC}(n, 2g+n/2-1)$ and $w\in (V^*)^{\otimes n}$.
\begin{lemma}
The map $\Psi\colon \Fun(\mathcal{QC}, \mathcal E_V) \to \Fun_\mathrm{sym}(V)$, given by
\[ C_n^g\otimes w \to (n!)^{-1} [w]\varkappa^g \]
 is an injective map of BD algebras over $\fld[[\varkappa]]$, with the image given by the elements with $2g + n > 2$. The map $w \mapsto [w]$ is the projection $(V^*)^{\otimes n} \to \mathrm{Sym}^n(V^*)$ given by $\phi_1 \ot\dots \ot \phi_n \mapsto  \phi_1 \dots \phi_n$, the graded-commutative product of $\phi_i$.  
\end{lemma}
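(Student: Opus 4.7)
The plan is to verify the claim component by component in the bigrading by $n$ and $g$, and then check compatibility with each piece of the BD structure. Since $\mathcal{QC}(n,2g+n/2-1)$ is one-dimensional with trivial $\Sigma_n$-action, a $\Sigma_n$-invariant element of $\mathcal{QC}(n,2g+n/2-1)\otimes (V^*)^{\otimes n}$ is determined by a symmetric $w\in ((V^*)^{\otimes n})^{\Sigma_n}$. The classical identification $((V^*)^{\otimes n})^{\Sigma_n} \xrightarrow{\cong} \mathrm{Sym}^n(V^*)$ via $w\mapsto [w]$ (with inverse $\phi_1\cdots\phi_n \mapsto \tfrac{1}{n!}\sum_{\sigma}\phi_{\sigma(1)}\otimes\cdots\otimes\phi_{\sigma(n)}$) makes $\Psi$ well-defined and bijective on each $(n,g)$-component. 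The image condition comes directly from unwinding the stability inequality \eqref{eq:stability}: $2(G-1)+n>0$ with $G=2g+n/2-1$ gives exactly $2g+n>2$, so unstable corollas are precisely the elements lying outside the image.

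Next, I would check compatibility with the $\fld[[\varkappa]]$-action and with $\star$. The action of $\varkappa$ on the source corresponds to $\sharp=\#_1\otimes\#_1$, and since $\#_1$ on $\mathcal{QC}$ sends $C_n^g\mapsto C_n^{g+1}$ while on $\mathcal{E}_V$ it is the identity, $\Psi(\sharp(C_n^g\otimes w))=(n!)^{-1}[w]\varkappa^{g+1}=\varkappa\,\Psi(C_n^g\otimes w)$. For the product, formula \eqref{connected_sum_on_CON} runs over $\binom{n_1+n_2}{n_1}$ shuffle decompositions of $[n_1+n_2]$; using $\#_2$ on $\mathcal{QC}$ which sums genera and merges label sets, and $\#_2$ on $\mathcal{E}_V$ which is the canonical ordering isomorphism of unordered tensor products, each shuffle contributes the same symmetric tensor $[w_1][w_2]$. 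The combinatorial factor $\binom{n_1+n_2}{n_1}$ cancels with $(n_1+n_2)!$ in the normalization and matches $\frac{1}{n_1!n_2!}[w_1][w_2]\varkappa^{g_1+g_2}$, which is exactly the commutative product of $\Psi(C_{n_1}^{g_1}\otimes w_1)$ and $\Psi(C_{n_2}^{g_2}\otimes w_2)$ in $\Fun_\mathrm{sym}(V)$.

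For the differential, $d$ on $\mathcal{E}_V$ is defined slotwise as the transpose of $d_V$, which corresponds precisely to the Koszul derivation $d$ in Definition \ref{def:SymBV} acting on $\mathrm{Sym}(V^*)$. For the BV operator, the operadic $\Delta$ applied to $C_n^g\otimes w$ gives $C_{n-2}^{g+1}\otimes \sum_k \partial_{e_k}^{(a)}\partial_{e^k}^{(b)}w$, which under $\Psi$ becomes $\tfrac{1}{(n-2)!}[\sum_k\partial_{e_k}^{(a)}\partial_{e^k}^{(b)}w]\,\varkappa^{g+1}$. On the other hand, $\varkappa\Delta$ applied to $\tfrac{1}{n!}[w]\varkappa^g$ produces $n(n-1)$ identical contributions from the $\binom{n}{2}\cdot 2$ choices of the pair of slots acted on by $\omega^{ij}\partial_i\partial_j$, together giving the factor $n(n-1)/n!=1/(n-2)!$, matching the source calculation after using the identity $\sum_k e_k\otimes e^k = \sum_{kl}(-1)^{|e_l|}\omega^{kl}e_k\otimes e_l$ to identify the operadic contraction with $\omega^{ij}\partial_i\partial_j$. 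Finally, compatibility with the bracket is forced by \eqref{eq:BVgood} since $\Psi$ intertwines $\Delta$, $\star$ and $\varkappa$, and bijectivity on each component yields injectivity.

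The main obstacle will be in the $\Delta$-verification: keeping track of the Koszul signs $(-1)^{|e_k|}$ coming from the definition of $e^k$ and matching them against the signs $(-1)^{|\phi^i|}$ in \eqref{eq:Deltasym}, while also carrying through the symmetrization from the unordered tensor product to $\mathrm{Sym}^n(V^*)$ in a way that produces the correct combinatorial factor $n(n-1)$. Similar, but easier, sign bookkeeping appears in verifying that the shuffle sum defining $\star$ reassembles into a graded-commutative product.
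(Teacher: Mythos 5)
Your argument is correct and, in its structural parts, coincides with the paper's proof: both identify $\Fun(\mathcal{QC},\mathcal E_V)(n,G)$ with the $\Sigma_n$-invariants of $(V^*)^{\otimes n}$ (hence with $\mathrm{Sym}^n(V^*)$ in characteristic zero), read the image off the stability condition \eqref{eq:stability}, and check $\star$-compatibility by noting that the $\binom{n_1+n_2}{n_1}$ shuffle terms in \eqref{connected_sum_on_CON} all project to the same symmetric tensor, the binomial factor cancelling against the factorial normalization. The one place you genuinely diverge is the verification of $\Delta$: you compute directly on a general $C_n^g\otimes w$, matching the single operadic contraction against the $n(n-1)$ equal contributions produced by the second-order operator \eqref{eq:Deltasym}, with $n(n-1)/n!=1/(n-2)!$, whereas the paper exploits the already-established multiplicativity to reduce the check to the quadratic elements $C_2^g\otimes(\phi^i\otimes\phi^j+(-1)^{|\phi^i||\phi^j|}\phi^j\otimes\phi^i)$ (multiplied by a power of $\varkappa$ when needed for stability), which confines the Koszul-sign bookkeeping you flag as the main obstacle to a single two-letter computation. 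Your observation that the bracket compatibility is forced by \eqref{eq:BVgood} together with \eqref{kompatibilita_delty_nasobeni} once $\Delta$, $\star$ and the $\varkappa$-action are intertwined (using that $\Fun_\mathrm{sym}(V)$ is $\varkappa$-torsion-free) is precisely the implicit reason the paper does not verify the bracket separately; either route is complete once the sign bookkeeping in your general-$n$ computation is actually carried out, and the paper's reduction is the cheaper way to do that.
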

\begin{proof}
The space $\mathcal{QC}(n, G)$ is the trivial representation of the permutation group $\Sigma_n$, and thus $\Fun(\mathcal{QC}, \mathcal E_V)(n, G)$ is the subspace of invariants in $(V^*)^{\otimes n}$. This implies that $\Psi$ is an injection with the image specified by the stability condition $2(G-1) + n > 0 \iff 2g + n > 2$. 

Compatibility with the action of $\varkappa$ is immediate. To check the compatibility  of $\Psi$ with products, note that the terms of the sum in \eqref{connected_sum_on_CON} differ only by an action of $\Sigma_{n_1 + n_2}$, as follows from Lemma \ref{lemmaunordered}. Thus, after the projection by $[-]$, all the $\binom{n_1 + n_2}{n_1}$ terms give the same contribution. Concretely, calculating the product, we get
$$ \Psi(C_{n_1}^{g_1}\otimes w_1\star C_{n_2}^{g_2}\otimes w_2) = \binom{n_1 + n_2}{n_1}\frac{1}{(n_1+n_2)!} [w_1\otimes w_2] \varkappa^{g_1+g_2}$$
which indeed equals
$$\Psi(C_{n_1}^{g_1}\otimes w_1)\cdot \Psi(C_{n_2}^{g_2}\otimes w_2) = \frac{1}{n_1!n_2!}[w_1][w_2]\varkappa^{g_1+g_2}.$$
This is thanks to the normalization of $\Psi$ and to the property $[w_1 \otimes w_2]= [w_1][w_2]$.
\smallskip

 As $\Psi$ is compatible with products, it is enough to check $d$  on linear elements and $\Delta$ on quadratic elements $C_2^g\otimes (\phi^i\otimes\phi^j +(-1)^{\hdeg{\phi^i}\hdeg{\phi^j}}\phi^j\otimes\phi^i)$. This is because these maps are determined by their values on such elements, possibly after multiplying with a high-enough power of $\varkappa$ to fulfill the stability condition. We discuss only the case of $\Delta$, defined in \eqref{eq:selfcompEndV} and \eqref{BVlaplace_jako_selfcomposition}, which sends the above quadratic element to 
\[ C_0^{g+1}\otimes (-1)^{\hdeg{\phi^i} + \hdeg{\phi^j}}[ \phi^i(e_k)\phi^j(e^k) +(-1)^{\hdeg{\phi^i}\hdeg{\phi^j}}\phi^j(e_k)\phi^i(e^k)] 
= 2 C_0^{g+1}\otimes(-1)^{\hdeg{\phi^i}} \omega^{ij}, \]
which $\Psi$ sends to $(-1)^{\hdeg{\phi^i}}  2 \omega^{ij} \varkappa^{g+1}$. This agrees with the action of $\varkappa\Delta$ from \eqref{eq:Deltasym} on $\phi^i\phi^j \varkappa^{g}$.
\end{proof}

\subsubsection[The operad QO]{The operad $\mathcal{QO}$}
Let $V$ be as before. We will now define a BD algebra structure on symmetric powers of cyclic words with letters from $V^*$. 
Related BV structures appeared for example in the work of Cieliebak, Latschev and Fukaya \cite[Section~10.]{CFL} and Barannikov \cite[Section~1.2]{BarannikovNCMI}. 
\begin{definition}
     The space of cyclic words in $V^*$ of length $k$
    is the space of coinvariants under the action of $\mathbb Z_k$ by cyclic permutations
    $$ \mathrm{Cyc}_k(V^*) = ((V^*)^{\otimes k})_{\mathbb Z_k}, $$
    with elements denoted by $(\phi_1 \dots \phi_n) = (-1)^{\hdeg{\phi_1} (\hdeg{\phi_2}+\dots + \hdeg{\phi_n})} (\phi_2 \dots \phi_n \phi_1)$.
    Then, we define the following algebra
    $$\Fun_\mathrm{cyc}(V) := \prod_{n\ge 0} \mathrm{Sym}^n (\bigoplus_{k\ge 1}\mathrm{Cyc}_k(V^*) ) [[\varkappa, \xi]]\,.$$
    This algebra carries a natural BD structure continuous in $\varkappa$ and $\xi$. The Laplacian is defined by
    $$ \Delta (\phi^{i_1} \dots \phi^{i_n} ) = \sum_{k=0}^{n-2} \pm \omega^{i_1 i_{k+2}} (\phi^{i_2} \dots \phi^{i_{k+1}}) (\phi^{i_{k+3}} \dots \phi^{i_n} )+\text{cycl.}$$
    where the sign $\pm$ in the first term is equal to $(-1)^{\hdeg{\phi^{i_1}} + \hdeg{\phi^{i_{k+2}}}( \hdeg{\phi^{i_2}} + \dots + \hdeg{\phi^{i_k+1}})}$. In the terms $k=0$ and $k=n-2$, one of the cyclic words is empty as is replaced by $\xi$. The remainder denoted ``$+\text{cycl.}$'' contains the $n-1$ terms obtained by cyclically permuting the indices $i_1 , \dots ,i_n$ in the first term and by multiplying by the Koszul sign of this cyclic permutation.

    On products of cycles, $\Delta$ is extended to a BD operator as in \eqref{eq:BVgood}, using the bracket
    $$ \{ (\phi^{i_1} \dots \phi^{i_{n_1}} ), (\phi^{j_1} \dots \phi^{j_{n_2}} )\} =   \pm 2 \omega^{i_1 j_1} (\phi^{i_2} \dots \phi^{i_{n_1}} \phi^{j_2} \dots \phi^{j_{n_2}}) + \text{cycl.$\times$cycl.}$$
    where the sign $\pm$ in the first term is equal to $(-1)^{\hdeg{\phi^{i_1}} + ( \hdeg{\phi^{i_1}} + \dots + \hdeg{\phi^{i_{n_1}}} )(\hdeg{\phi^{j_1}}+1)}$. The term ``$+ \text{cycl.$\times$cycl.}$'' consists of $n_1n_2-1$ terms obtained from the first term by cyclic permutations among indices $i$ and $j$, multiplied by the appropriate sign. For $n_1 = n_2 = 1$, the empty cycle is replaced by $\xi$. 
    
    The induced differential is given as in Definition \ref{def:SymBV}.
\end{definition}
In contrast to $\Fun_\mathrm{sym}(V)$, this BD algebra cannot be induced from a BV algebra by replacing $\Delta_\textnormal{BV}$ with $\varkappa\Delta_\textnormal{BV}$. This can be seen on the level of the operad $\mathcal{QO}$: the self-composition $\circ_{ab}$ applied on a disk is an annulus, which cannot be written as an image of $\#_1$. 
\begin{example}
    To illustrate the above formulas, let us give a few simple examples the BD structure defined above.
    \begin{align*}
        \Delta (\phi^a \phi^b) &= 2(-1)^\hdeg{\phi^a} \omega^{ab}\xi , \\
        \Delta (\phi^a\phi^b\phi^c) &= 2 ((-1)^\hdeg{\phi^a} \omega^{ab}(\phi^c)+ \text{cycl.})\xi \\
         &= 2\left((-1)^\hdeg{\phi^a} \omega^{ab}(\phi^c)+ (-1)^{\hdeg{\phi^a} + \hdeg{\phi^b}} \omega^{bc}(\phi^a)+ (-1)^{\hdeg{\phi^b}\hdeg{\phi^c} + \hdeg{\phi^c}} \omega^{ca}(\phi^b)\right)\xi , \\
         \{(\phi^a), (\phi^b)\} &= 2 \omega^{ab}\xi.
    \end{align*}
\end{example}

We would like to show that $\Fun_\mathrm{cyc}(V)$ contains $\Fun(\mathcal{QO}, \mathcal E_V)$, with $\varkappa$ counting the geometrical genus of the element of $\mathcal{QO}$ and $\xi$ counting the empty punctures. 

In this case, the $\Sigma_n$ invariants in $\mathcal{QO}(n, G)\otimes (V^*)^{\otimes n}$ can be described as follows (see also \cite[V.C]{DJM}): the $\Sigma_n$-representation $\mathcal{QO}(n, G)$ comes from the set of all cycles on letters $1 \dots n$, with its $\Sigma_n$-action given by renumbering. Orbits of this set-theoretic action are completely specified by sequences\footnote{These are subject to the obvious conditions $\sum_i b_i = b$ and $\sum_i ib_i = n$.} $(b_0, b_1, \dots)\in \mathbb N^\mathbb{N}$, where $b_i$ is  the number of cycles of length $i$. Choose the following element in each orbit
\begin{equation}\label{eq:standardb} x_\mathbf{b} := \underbrace{()\dots ()}_{b_0\text{ times}} \underbrace{(1)\dots (b_1)}_{b_1\text{ times}} \underbrace{( (b_1+1) (b_1+2)) \dots }_{b_2\text{ times}}  \;\dots \end{equation}
For each such admissible $\mathbf{b}$ and
$w \in (V^*)^{\otimes n}$ invariant under the stabilizer of $x_\mathbf{b}$, we have an invariant element
\begin{equation}\label{eq:symnonstab}
\sum_{\sigma \in \Sigma_n/\mathrm{Stab}(\mathbf{b})} \sigma x_\mathbf{b} \otimes \sigma w,
\end{equation} 
The space of invariants $(\mathcal{QO}(n, G)\otimes (V^*)^{\otimes n})^{\Sigma_n}$ is spanned by such elements\footnote{If $G$ is a finite group, $X$ a finite $G$-set and $W$ a $G$-representation, then $(\fld X \otimes W)^G \cong \bigoplus_{O_i} W^{\mathrm{Stab}_{x_i}}$, where the sum is over all orbits of the $G$-action in $X$ and $x_i\in O_i$ is an arbitrarily chosen element of the orbit.}.

Define a map $\Theta\colon (\mathcal{QO}(n, G)\otimes (V^*)^{\otimes n})^{\Sigma_n} \to \Fun_{\mathrm{cyc}}(V)$ by 
\[ \Theta\colon  \sum_{\sigma \in \Sigma_n/\mathrm{Stab}(\mathbf{b})} \sigma x_\mathbf{b} \otimes \sigma w_\mathbf b \mapsto 
\frac{1} {\prod_{i\ge 1} i^{b_i} b_i !} [w_\mathbf{b}] \varkappa^{g}\xi^{b_0},  \]
where $w \mapsto [w]$ is the composition
\[ (V^*)^{\otimes n} \to \bigotimes_{i\ge 1} \left(\mathrm{Cyc}_{i}(V^*) \right)^{\otimes b_i} \to 
\bigotimes_{i\ge 1} \mathrm{Sym}^{b_i}\left(\mathrm{Cyc}_{i}(V^*) \right) \hookrightarrow
\Fun_{\mathrm{cyc}}(V)\,.\]

\begin{lemma}
    The map $\Theta \colon \Fun(\mathcal{QO}, \mathcal E_V) \to \Fun_\mathrm{cyc}(V)$ is an injective map of BD algebras over $\fld[[\varkappa]]$, with the image given by elements with $2g + b+ n/2> 2$. Here, $b$ is $b_0$ plus the total number of cyclic words, $n$ is the total number of letters.
\end{lemma}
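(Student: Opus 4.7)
The overall strategy mirrors the proof of the analogous statement for $\mathcal{QC}$, but with extra bookkeeping to account for cycles and the symmetric algebra structure. First I would make the structural observation that for each admissible sequence $\mathbf{b} = (b_0, b_1, \dots)$ the stabilizer of the representative $x_\mathbf{b}$ in \eqref{eq:standardb} under the $\Sigma_n$-action has order exactly $\prod_{i\ge 1} i^{b_i} b_i!$, since it is generated by cyclic rotations inside each of the $b_i$ cycles of length $i$ (contributing $i^{b_i}$) and by permutations among the $b_i$ equal-length cycles (contributing $b_i!$). The invariants $(\mathcal{QO}(n,G)\otimes (V^*)^{\otimes n})^{\Sigma_n}$ thus split as the direct sum over $\mathbf{b}$ compatible with $(n,G)$ of the subspaces of the form \eqref{eq:symnonstab}, and for each such $\mathbf{b}$ the assignment $w_\mathbf{b} \mapsto [w_\mathbf{b}]$ yields an isomorphism between $((V^*)^{\otimes n})^{\mathrm{Stab}(x_\mathbf{b})}$ and $\bigotimes_{i\ge 1}\mathrm{Sym}^{b_i}(\mathrm{Cyc}_i(V^*))$ (in characteristic zero, coinvariants and invariants coincide). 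This gives both well-definedness and injectivity of $\Theta$ block-by-block, and the factor $\prod i^{b_i} b_i!$ in the normalization is exactly what is needed to make the map compatible with the natural embedding into $\Fun_\mathrm{cyc}(V)$.

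The image claim follows from rewriting the stability condition $2(G-1) + n > 0$ using $G = 2g + b - 1$: this becomes $2g + b + n/2 > 2$, where $b = \sum_i b_i$ counts all cycles (including the empty ones counted by $b_0$) and $n = \sum_i i b_i$. Compatibility with the action of $\varkappa$ (i.e.\ with $\sharp = \#_1\otimes \#_1$, which on $\mathcal{QO}$ adds a handle) is immediate from the definition, as is compatibility with $\xi$ (which counts empty cycles, unaffected by all operations except those that create or destroy them).

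For compatibility with $\star$, I would copy the strategy from the $\mathcal{QC}$ case: the $\binom{n_1+n_2}{n_1}$ terms in the sum \eqref{connected_sum_on_CON} differ only by the action of $\Sigma_{n_1+n_2}$, so after passing through $\Theta$ they become equal, and the binomial factor combines with the normalization factors $1/(\prod i^{b_i^{(1)}} b_i^{(1)}!\cdot \prod i^{b_i^{(2)}} b_i^{(2)}!)$ in the two factors to produce the correct symmetric-algebra normalization $1/(\prod i^{b_i^{(1)} + b_i^{(2)}} (b_i^{(1)} + b_i^{(2)})!)$ after summing over the relabelings which redistribute identical-length cycles between the two factors. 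Compatibility with $d$ is then automatic since $d$ is a derivation and $\Theta$ is a map of algebras; it only needs to be checked on the linear generators $(\phi^i) \in \mathrm{Cyc}_1(V^*)$, where both sides are manifestly equal.

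Finally, compatibility with $\Delta$ (and hence, by equation \eqref{kompatibilita_delty_nasobeni}, with the bracket up to the commutative product and $\sharp$) only needs to be verified on a single cyclic word $(\phi^{i_1} \dots \phi^{i_n})$, since $\Theta$ is already known to respect the product. For such an element I would trace through the self-composition \eqref{eq:selfcompEndV} on $\mathcal{QO}$: the two punctures summed over in $\ooo{}{ab}$ can either lie on the same boundary (splitting one cycle into two, possibly creating empty cycles that get replaced by $\xi$) or on different boundaries (merging two cycles, which however for a single cycle input contributes through the $\#_1 \ooo{a}{b}$ branch in (CS5a), producing a handle $\varkappa$). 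This will reproduce exactly the formula for $\Delta$ on cyclic words, including the ``$+\text{cycl.}$'' sum and the $\xi$ substitution in the endpoint terms. The main obstacle throughout is the careful bookkeeping of the combinatorial factors and Koszul signs that emerge when passing from the invariant description \eqref{eq:symnonstab} on the operad side to the $\mathrm{Sym}^{b_i}(\mathrm{Cyc}_i(V^*))$ description; but these are ultimately forced by the normalization $1/(\prod i^{b_i} b_i!)$ in the definition of $\Theta$.
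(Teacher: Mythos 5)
Your treatment of injectivity, the image, and the compatibility with $\star$ matches the paper's proof (stabilizer of order $\prod_i i^{b_i}b_i!$, invariants-to-coinvariants, and the cancellation of the binomial factor $\prod_i\binom{b^{(1)}_i+b^{(2)}_i}{b^{(1)}_i}$ against the $1/b_i!$ normalization). The gap is in your reduction scheme for $\Delta$ and the bracket, which is circular. You verify $\Delta$-compatibility only on a single cyclic word and then claim bracket-compatibility follows from \eqref{kompatibilita_delty_nasobeni}. But $\Delta$ is a second-order operator: its values on the algebra generators (single cyclic words) together with multiplicativity of $\Theta$ do \emph{not} determine it on products --- the missing second-order data is exactly $\sharp\{-,-\}$. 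To extend $\Delta$-compatibility from single cycles to products via \eqref{kompatibilita_delty_nasobeni} you already need to know that $\Theta$ intertwines the brackets of two single cycles, which is precisely what you propose to deduce afterwards. So as written, neither the bracket statement nor $\Delta$-compatibility beyond one-cycle elements is established. The paper resolves this by checking the bracket on pairs of single cyclic words directly (the count of $n_1+n_2-2$ interval decompositions $C_1\sqcup C_2$ cancelling the normalization, with the separate case $n_2=1$ where $n_1-1$ permutations from \eqref{eq:symnonstab} contribute); equivalently you could check $\Delta$ directly on a product of two cycles, but you do neither, and this cross-boundary computation is genuinely different combinatorics from the one-cycle splitting you describe. (Your parenthetical that the ``different boundaries'' case arises for a single-cycle input via the $\#_1\ooo{a}{b}$ branch of (CS5a) is also off: a single cycle has one boundary, and its self-sewing only splits it; no $\varkappa$ appears there.)

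A smaller slip: you claim $d$ need only be checked on $\mathrm{Cyc}_1(V^*)$. The commutative algebra $\Fun_\mathrm{cyc}(V)$ is generated by cyclic words of \emph{all} lengths, which are not products of shorter ones, so the derivation property only reduces the check to arbitrary single cyclic words (as in the paper), not to length-one words. The verification on a general cycle is immediate, so this is easily repaired, but it should be stated correctly.
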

\begin{proof}
The injectivity of $\Theta$ follows from the fact that the map $w \mapsto [w]$ is an isomorphism from invariants to coinvariants for the stabilizer subgroup of $\mathbf b$.
\smallskip

Let us check the compatibility of $\Theta$ with the products.  In the product of two elements
\[ \sum_{\sigma} \sigma x_{\mathbf{b}^{(1)}} \otimes \sigma w^{(1)} \star 
\sum_{\sigma'} \sigma' x_{\mathbf{b}^{(2)}} \otimes \sigma' w^{(2)}  \] 
there are $\prod_{i\ge 1} \binom{b^{(1)}_i + b^{(2)}_i}{b^{(1)}_i}$ contributions to the term $x_{\mathbf b^{(1)} + \mathbf b^{(2)}} \otimes W$, and their contributions to the tensor $W$ all differ by a permutation only among cycles of the same length, i.e. a permutation stabilizing $x_{\mathbf b^{(1)} + \mathbf b^{(2)}}$. Thus, in $[W]$, they are all equal, and the combinatiorial factor $\prod_{i\ge 1} \binom{b^{(1)}_i + b^{(2)}_i}{b^{(1)}_i}$ cancels thanks to $\prod 1/{b_i !}$ in the definition of $\Theta$. 
\smallskip

Using the compatibility of $\Theta$ with products, it is now enough to check $d$, $\Delta$ and the bracket on elements with only one cycle.  

Let us calculate $\Delta (\phi^{i_1} \dots \phi^{i_n})$. The cyclic word $(\phi^{i_1} \dots \phi^{i_n})$ can by obtained by $\Theta$ from the element 
\[ (1\dots n) \otimes ( [1 + \tau + \dots + \tau^{n-1}] \phi^{i_1} \otimes \dots \otimes \phi^{i_n} ) + \dots, \]
where $\tau$ is the cyclic permutation of $1\mapsto 2 \mapsto \dots \mapsto n \mapsto 1$ and the $\dots$ at the end denote the $(n-1)! - 1$ remaining terms. In \eqref{BVlaplace_jako_selfcomposition}, we choose $\theta$ as
\[ \theta(1) = a, \,\theta(2) = b, \,\theta(k) = k-2 \text{ for } k>2. \]
The operator $\Delta$ then cuts the relabeled cycle $(1 \dots a \dots b \dots n)$ at $a$ and $b$ into two (possibly empty) cycles. To calculate $\Theta$, we need to find which terms contribute to the terms $x_\mathbf{b} \otimes \dots$, i.e. which cuts result in two cycles in the standard form \eqref{eq:standardb}. There are $2 k(n-2-k)$ such contributions\footnote{If $k=0$ or $n-2-k=0$, there are still two contributions; let us not mention this technicality again.} to each possible length of cycles, coming from term labeled $(a [1 \dots k] b [k+1 \dots n-2])$ and $(a [k+1 \dots n-2] b  [1 \dots k])$ for any $0 \le k \le n/2-1$. Using the symmetry of $(-1)^{\hdeg {\phi^a}}\omega^{ab}$, re-expressing the tensor $[1 + \tau + \dots + \tau^{n-1}] \phi^{i_1} \dots \phi^{i_n}$ using a cyclic permutation exchanging $a \leftrightarrow b$ and collecting the signs, one obtains that these contributions are equal. 

The factor $k(n-2-k)$ cancels with the normalisation $\prod_{i\ge 0} i^{b_i}$ of the map $\Theta$. The factor $2$ can be removed by expanding the possible values of $k$ to $n-2$; if $k = n-2-k$ and the two cycles are of the same length, this factor of $2$ instead cancels the $2!$ from the normalisation of $\Theta$. Together, we thus obtain
\[  \Delta (\phi^{i_1} \dots \phi^{i_n}) = \sum_{k = 0}^{n-2} \left[ (-1)^{(\hdeg{\phi^{i_2}} + \dots + \hdeg{\phi^{i_{k+1}}})\hdeg{\phi^{i_{k+2}}}  }\tilde\omega^{i_1 i_{k+2}} (\phi^{i_2} \dots \phi^{i_{k+1}}) (\phi^{i_{k+3}} \dots \phi^{i_n}) + \text{ cycl.} \right], \]
with the convention that an empty cycle (for $k=0$ or $k=n-2$) is replaced by $\xi$. The $n-1$ terms in $+\text{ cycl.}$ are obtained from $\tau^{m} \phi^{i_1}\otimes \dots \otimes \phi^{i_n}$ for $1\le m \le n-1$, i.e. they also contain the sign from permuting the graded covectors $\phi$. 
\smallskip

The bracket, defined in \eqref{definice_zavorky}, is computed similarly. Looking at 
\[ \{ (\phi^{i_1} \dots \phi^{i_{n_1}} ), (\phi^{j_1} \dots \phi^{j_{n_2}} )\} \]
for $n_1, n_2 > 1$, the only terms from the sum over decompositions which contribute to the cycle $(1 \dots n_1 + n_2 - 2)$  are those where $C_1$ is an interval $\{l, \dots, l+n_1 -1\} \mod (n_1+n_2-2)$. Moreover, for each such decomposition, only one permutation from the sum \eqref{eq:symnonstab} contributes. There are $n_1+n_2-2$ choices for $l$, which are all equal after the projection $[-]$; this cancels the normalization of $\Theta$. 

The case $n_2 =1$ is different: there is only one choice of a decomposition, and $n_1-1$ different permutations from \eqref{eq:symnonstab} contribute, namely the cyclic permutations of the interval $\{1, \dots, n_1-1\}$. 
\end{proof}

\begingroup
\let\clearpage\relax

\endgroup

\end{document}